\setlist[enumerate]{
  label=(\thethm.\arabic*),
  before={\setcounter{enumi}{\value{equation}}},
  after={\setcounter{equation}{\value{enumi}}},
  itemsep=1ex
}
\setlist[itemize]{
  leftmargin=*,
  itemsep=1ex,
  label=$\circ$
}
\newcommand{\mypagesize}{
  \addtolength{\textwidth}{4pt}
  \addtolength{\textheight}{27pt}
  \calclayout
}
\newtheorem*{thm-plain}{Theorem}
\newtheorem{thm}{Theorem}[section]
\newtheorem{lem}[thm]{Lemma}
\newtheorem{prp}[thm]{Proposition}
\newtheorem{cor}[thm]{Corollary}
\newtheorem{fct}[thm]{Fact}
\numberwithin{equation}{thm}
\theoremstyle{definition}
\newtheorem{dfn}[thm]{Definition}
\newtheorem*{dfn-plain}{Definition}
\theoremstyle{remark}
\newtheorem{clm}[thm]{Claim}
\newtheorem{awlog}[thm]{Additional Assumption}
\newtheorem{exm}[thm]{Example}
\newtheorem*{rem-plain}{Remark}
\newcommand{\inv}{^{-1}}
\newcommand{\from}{\colon}
\newcommand{\gdw}{\ensuremath{\Leftrightarrow}}
\newcommand{\imp}{\ensuremath{\Rightarrow}}
\newcommand{\lto}{\longrightarrow}
\newcommand{\x}{\times}
\newcommand{\inj}{\hookrightarrow}
\newcommand{\bij}{\overset\sim\lto}
\newcommand{\isom}{\cong}
\newcommand{\defn}{\coloneqq}
\newcommand{\ndef}{\eqqcolon}
\newcommand{\wt}{\widetilde}
\renewcommand{\d}{\mathrm d}
\newcommand{\dual}{^{\smash{\scalebox{.7}[1.4]{\rotatebox{90}{\textup\guilsinglleft}}}}}
\newcommand{\ddual}{^{\smash{\scalebox{.7}[1.4]{\rotatebox{90}{\textup\guilsinglleft} \hspace{-.5em} \rotatebox{90}{\textup\guilsinglleft}}}}}
\newcommand{\factor}[2]{\left. \raise 2pt\hbox{$#1$} \right/\hskip -2pt \raise -2pt\hbox{$#2$}}
\newcommand{\set}[1]{\left\{ #1 \right\}}
\def\rd#1.{\lfloor{#1}\rfloor}
\def\rp#1.{\lceil{#1}\rceil}
\def\tw#1.{\langle{#1}\rangle}
\renewcommand{\O}[1]{\mathscr{O}_{#1}}
\newcommand{\Omegap}[2]{\Omega_{#1}^{#2}}
\newcommand{\Omegar}[2]{\Omega_{#1}^{[#2]}}
\newcommand{\T}[1]{\mathscr{T}_{#1}}
\newcommand{\can}[1]{\omega_{#1}}
\newcommand{\Reg}[1]{{#1}_{\mathrm{reg}}}
\newcommand{\Sing}[1]{{#1}_{\mathrm{sg}}}
\newcommand{\Red}[1]{{#1}_{\mathrm{red}}}
\newcommand{\cc}[2]{\mathrm{c}_{#1}(#2)}
\def\Hnought#1.#2.{\mathit{\Gamma} \!\left( #1, #2 \right)}
\def\HH#1.#2.#3.{\mathrm{H}^{#1} \!\left( #2, #3 \right)}
\def\euler#1.#2.{\chi \!\left( #1, #2 \right)}
\def\HHbig#1.#2.#3.{\mathrm{H}^{#1} \!\big( #2, #3 \big)}
\def\hh#1.#2.#3.{h^{#1} \!\left( #2, #3 \right)}
\def\RR#1.#2.#3.{R^{#1} #2_* #3}
\def\HHc#1.#2.#3.{\mathrm{H}_{\mathrm{c}}^{#1} \!\left( #2, #3 \right)}
\def\Hh#1.#2.#3.{\mathrm{H}_{#1} \!\left( #2, #3 \right)}
\def\Hom#1.#2.{\mathrm{Hom} \!\left( #1, #2 \right)}
\def\sHom#1.#2.{\mathscr{H}\!om \!\left( #1, #2 \right)}
\def\Ext#1.#2.#3.{\mathrm{Ext}^{#1} \!\left( #2, #3 \right)}
\def\sExt#1.#2.#3.{\mathscr{E}\!xt^{#1} \!\left( #2, #3 \right)}
\newcommand{\PP}[1]{\mathbb P^{#1}}
\newcommand{\kahler}{K{\"{a}}hler\xspace}
\DeclareMathOperator{\Picn}{Pic^\circ}
\DeclareMathOperator{\Exc}{Exc}
\newcommand{\pg}[2]{p_g(#1, #2)}
\newcommand{\germ}[2]{\left( #2, #1 \right)} 
\renewcommand{\theta}{\vartheta}
\renewcommand{\phi}{\varphi}
\newcommand{\N}{\ensuremath{\mathbb N}}
\newcommand{\Z}{\ensuremath{\mathbb Z}}
\newcommand{\Q}{\ensuremath{\mathbb Q}}
\newcommand{\R}{\ensuremath{\mathbb R}}
\newcommand{\C}{\ensuremath{\mathbb C}}
\renewcommand{\P}{\ensuremath{\mathbb P}}
\renewcommand{\frm}{\mathfrak m}
 \newcommand{\sE}{\mathscr E} \newcommand{\sF}{\mathscr F}
\newcommand{\sG}{\mathscr G}  
 \newcommand{\sK}{\mathscr K} \newcommand{\sL}{\mathscr L}
 \newcommand{\cQ}{\mathcal Q}
\definecolor{forrest}{RGB}{81,133,49}
\definecolor{mydarkblue}{RGB}{10,92,153}
\newcommand{\PreprintAndPublication}[2]{#1}
\title{The Lipman--Zariski conjecture in low genus}
\dedicatory{Cad{\^{e}} vi{\'{o}}la? Cad{\^{e}} meu bem?}
\author{Patrick Graf}
\address{Lehrstuhl f\"ur Mathematik I, Universit\"at Bayreuth, 95440 Bayreuth, Germany}
\email{\href{mailto:patrick.graf@uni-bayreuth.de}{patrick.graf@uni-bayreuth.de}}
\urladdr{\href{http://www.pgraf.uni-bayreuth.de/en/}{www.graficland.uni-bayreuth.de}}
\date{May 6, 2021}
\keywords{Lipman--Zariski conjecture, surface singularities, compact surfaces with trivial tangent sheaf, surfaces with generically nef tangent sheaf}
\subjclass[2010]{14B05, 14J17, 32S25, 13N05}
\begin{document}

\begin{abstract}
We prove the Lipman--Zariski conjecture for complex surface singularities of genus one, and also for those of genus two whose link is not a rational homology sphere.
As an application, we characterize complex $2$-tori as the only normal compact complex surfaces whose smooth locus has trivial tangent bundle.
We also deduce that all complex-projective surfaces with locally free and generically nef tangent sheaf are smooth, and we classify them.
\end{abstract}

\maketitle


\section{Introduction}

The Lipman--Zariski conjecture asserts that a complex algebraic variety $X$ with locally free tangent sheaf $\T X$ is necessarily smooth.
Here $\T X = \sHom \Omegap X1.\O X.$ is the dual of the sheaf of \kahler differentials.
It is known that such an $X$ is at least normal~\cite[Thm.~3]{Lip65}.
Moreover, if the conjecture fails then there is a counterexample with isolated singularities~\cite[Sec.~8, p.~519]{Becker78}.
Finally, the conjecture holds if the singular locus of $X$ has codimension $\ge 3$~\cite[Corollary]{Flenner88}.
Taken together, these results show that \emph{it suffices to consider the case of normal surface singularities.}

A natural approach to the Lipman--Zariski conjecture is to study it under additional assumptions on the singularities of $X$.
A well-known and interesting class of singularities is given by the class of rational singularities.
For these, the conjecture is already known in any dimension, since rational Gorenstein singularities are canonical and the conjecture is true even more generally for log canonical singularities~\cite[Thm.~1.1]{Dru13}, \cite[Cor.~1.3]{GK13}.
In this paper we deal with surface singularities that are ``not too far'' from being rational, in the sense that their (geometric) genus is low.
Recall that the genus $\pg X0$ of a surface singularity $\germ0X$ is the dimension of $\RR1.f.\O Y.$, for a resolution $f \from Y \to X$ (see \cref{pg}).

\begin{thm}[Lipman--Zariski conjecture in low genus] \label{lowgenus lz}
Let $\germ0X$ be a normal complex surface singularity, i.e.~a germ of a two-dimensional normal complex space.
Assume that either
\begin{enumerate}
\item \label{genus1} $\pg X0 \le 1$, or
\item \label{genus2} $\pg X0 = 2$ and for some log resolution $f \from Y \to X$, the exceptional locus $E = \Exc(f)$ is \emph{not} a tree of rational curves.
\end{enumerate}
Then the Lipman--Zariski conjecture holds for $\germ0X$.
That is, if $\T X$ is free, then $\germ0X$ is smooth.
\end{thm}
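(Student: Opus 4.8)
The plan is to collapse the hypothesis onto an extremely rigid class of singularities and then obstruct freeness on a resolution. First I would run the determinant reduction: if $\mathscr{T}_X\cong\mathcal O_X^{\oplus2}$ then its reflexive dual $\Omega_X^{[1]}$ is free as well, and choosing dual frames $\theta_1,\theta_2$ and $\alpha_1,\alpha_2$ the reflexive $2$-form $\alpha_1\wedge\alpha_2$ trivialises $\Omega_X^{[2]}=\omega_X^{[1]}$. Hence $\omega_X^{[1]}\cong\mathcal O_X$, so $\germ0X$ is Gorenstein with trivial, Cartier canonical class. Since the Lipman--Zariski conjecture is already known for log canonical singularities by \cite[Thm.~1.1]{Dru13} and \cite[Cor.~1.3]{GK13}, I may assume throughout that $\germ0X$ is \emph{not} log canonical. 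This is automatic in case \ref{genus2}, because a Gorenstein surface singularity with $p_g=2$ cannot be log canonical (the log canonical Gorenstein points have $p_g\le1$); and in case \ref{genus1} Laufer's theorem identifies a Gorenstein point with $p_g=1$ as minimally elliptic, among which only the simple elliptic singularities and the cusps are log canonical. So the two targets are a non-log-canonical minimally elliptic point, respectively a Gorenstein $p_g=2$ point whose link is not a rational homology sphere.

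Next I would pass to the minimal good resolution $g\colon Y\to X$ with reduced exceptional divisor $E=\sum E_i$. Off $E$ the frame $\theta_1,\theta_2$ is an honest frame of $\mathscr{T}_Y$, and $\theta_1\wedge\theta_2$ trivialises $\wedge^2\mathscr{T}_Y=\mathcal O_Y(-K_Y)$ there; since $K_X\sim0$ we have $K_Y=\sum a_iE_i$ with $a_i$ the discrepancies, so the pole orders of the lifted fields along the $E_i$ are governed by the $a_i$, which drop below $-1$ precisely because $X$ is not log canonical. Dually, I would extend the reflexive $1$-forms $\alpha_1,\alpha_2$ across $E$ as logarithmic differentials in $H^0\!\big(Y,\Omega_Y^1(\log E)\big)$, together with the regular $2$-form $g^*(\alpha_1\wedge\alpha_2)$ whose zero divisor is exactly the canonical cycle $K_Y$. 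The genuine content of the low-genus hypothesis should enter here: the obstruction to this extension is measured by $R^1g_*\mathcal O_Y$, whose length is $p_g$, so that $p_g\le1$, or $p_g=2$ with $h^1(E,\mathcal O_E)\ne0$, is exactly what keeps the extension (and the accompanying bookkeeping of discrepancies) under control.

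With two logarithmic $1$-forms on $Y$ that are independent off $E$, I would restrict to the exceptional curves. Via the logarithmic conormal sequence each form restricts on a component $E_i$ to a section of $\Omega_{E_i}^1$ twisted by the residue and normal data; the existence of two generically independent such sections forces degree inequalities that are compatible with $E_i$ rational or elliptic, and with the dual graph a tree, unless both forms vanish on $E_i$. In case \ref{genus2} the hypothesis that $E$ is \emph{not} a tree of rational curves supplies a component of genus $\ge1$ or a loop in the graph, hence a nonzero class in $h^1(E,\mathcal O_E)$; fed into the trivialisation of $\omega_X$ and the shape of the canonical cycle on a $p_g=2$ Gorenstein point, this should collide with the negative definiteness of the intersection form. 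In case \ref{genus1} the same restriction shows the minimally elliptic leading cycle would have to be a smooth elliptic curve or a cycle of rational curves, i.e. simple elliptic or a cusp, contradicting that $X$ is not log canonical. In every case the only surviving configuration is $E=\emptyset$, so $\germ0X$ is smooth.

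The hard part will be the middle step for non-log-canonical points: pinning down the pole orders of the lifted frame, equivalently the precise logarithmic-extension behaviour of $\alpha_1,\alpha_2$, once some discrepancy $a_i<-1$, and then turning the restriction data on each $E_i$ into a clean numerical contradiction. This is exactly where the genus bounds are doing the work --- $p_g\le1$, or $p_g=2$ together with $h^1(E,\mathcal O_E)\ne0$, is what makes the canonical-cycle accounting tight enough to exclude every singular graph, and I expect that the omitted case ($p_g=2$ with $E$ a rational tree) is precisely the configuration in which these estimates fail to close.
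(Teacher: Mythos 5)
Your global setup is sound: freeness of $\T X$ does give $\Omegar X2 \isom \can X \isom \O X$, so $\germ0X$ is Gorenstein with trivial canonical class, the reduction to the non-log-canonical case via Druel and Graf--Kov\'acs is legitimate, and Laufer's theorem correctly identifies the Gorenstein $\pg X0 = 1$ points as minimally elliptic. The genuine gap is the middle step, which you flag yourself but do not fill: you assert that $\alpha_1$ and $\alpha_2$ both extend to sections of $\Omegap Y1(\log E)$. For the singularities you have reduced to, this is exactly what fails in general --- logarithmic extension of reflexive $1$-forms is a theorem for log canonical spaces, and you are working in the complement of that class; there is no a priori inclusion $\Omegar X1 \subseteq f_* \Omegap Y1(\log E)$, and remarking that the obstruction ``is measured by'' $\RR1.f.\O Y.$ is not an argument. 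The paper converts the genus hypothesis into an extension statement by a sharper mechanism: \cref{dim1} shows $\dim \factor{\can X}{f_* \can Y(E)} \le 1$, in case~\labelcref{genus1} because $\dim \factor{\can X}{f_* \can Y} = \pg X0 \le 1$, and in case~\labelcref{genus2} because Grauert--Riemenschneider vanishing and the residue sequence identify $\factor{f_* \can Y(E)}{f_* \can Y}$ with $\HH0.E.\can E.$, Serre dual to $\HH1.E.\O E. \ne 0$ by \cref{ratl tree} --- this is precisely, and only, where ``not a tree of rational curves'' enters. Consequently $\d\alpha_1$ and $\d\alpha_2$ are linearly dependent modulo $f_* \can Y(E)$, so \emph{one} combination $\alpha_1 + \lambda\alpha_2$ has exterior derivative with at worst log poles, and the Steenbrink--van Straten injectivity (\cref{SvS}) upgrades this to a pole-free extension of the $1$-form itself. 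Nothing in your sketch plays the role of \cref{SvS}, and without it (or a substitute) no extension of any kind is available, so the ``canonical-cycle accounting'' you plan never gets started.

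Your endgame is also not yet an argument, and it aims at the wrong target: restricting hypothetical log forms to components of $E$ and hoping that ``degree inequalities collide with negative definiteness'' does not exclude non-lc graphs, and the paper never excludes them numerically. Instead it pairs the single extended form $\wt\alpha_1$ with the lift $\wt v_1 \in \HHbig 0.Y.\T Y(-\log E).$ provided by the functorial resolution: $\wt\alpha_1(\wt v_1) \equiv 1$ forces $\wt v_1$ to be nowhere zero, while tangency to $E$ forces it to vanish at every singular point of $E$; hence $E$ is smooth irreducible carrying a nowhere-vanishing vector field, i.e.\ elliptic, and $(K_Y + E) \cdot E = 0$ yields $a(E, X) = -1$. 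The upshot is that the low-genus hypothesis forces $\germ0X$ \emph{into} the log canonical class (after which LZ for lc singularities, or the elementary \cref{elliptic lz}, finishes), rather than ruling out non-lc configurations graph by graph; your contradiction-with-non-lc framing is compatible in spirit, but the contradiction can only be manufactured through the extension step you left open. (A minor slip: the reflexive canonical sheaf is $\Omegar X2 = \can X$, not $\plcan 1X$.)
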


\begin{rem-plain}
The condition on $E$ in \labelcref{genus2} does not depend on the choice of resolution, since it is equivalent to the link of $\germ0X$ not being a rational homology sphere\PreprintAndPublication{ (see \cref{ratl hom sphere})}{}.
\end{rem-plain}

\begin{rem-plain}
There exist Gorenstein surface singularities of genus $1$ which are not log canonical, hence to which~\cite{Dru13, GK13} do not apply.
On the other hand, somewhat surprisingly at first sight, by contracting a tree of rational curves one can obtain singularities of arbitrarily high genus.
Examples of this type are given in \cref{examples}.
\end{rem-plain}

\subsection*{Corollaries}

As an application, we study a global version of the Lipman--Zariski conjecture.
Namely, assume that $X$ is a compact complex surface with \emph{globally} free tangent sheaf.
\emph{Does it follow that $X$ is smooth?}
Note that we may equivalently assume $X$ to be a normal compact complex surface whose smooth locus $\Reg X$ has trivial tangent bundle.

Partial answers have been given by Ballico~\cite[Thm.~2]{Ballico06} and Biswas--Gurjar--Kolte~\cite[Thm.~1.2]{BiswasGurjarKolte14}.
Our main result enables us to settle this question completely, even under weaker assumptions.
Recall that a (reduced and connected) compact complex space $X$ is called \emph{almost homogeneous} if its automorphism group acts with a dense open orbit.
This is equivalent to the tangent sheaf $\T X$ being globally generated at some point.

\begin{cor}[Global LZ conjecture, I] \label{X almost hom}
Let $X$ be an almost homogeneous compact complex surface such that $\T X$ is locally free.
Then $X$ is smooth.
\end{cor}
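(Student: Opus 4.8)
The plan is to reduce the global statement to the local \cref{lowgenus lz}, applied at each singular point of $X$. First I would note that, since $\T X$ is locally free, $X$ is normal by Lipman's theorem~\cite{Lip65}, so that $\Sing X$ is a finite set of points. Automorphisms preserve the singular locus, and $\Autn X$ is connected, so it acts trivially on the finite set $\Sing X$; hence every singular point $p$ is fixed by $\Autn X$, and the dense open orbit is necessarily contained in the smooth locus $\Reg X$. Thus it suffices to show that each germ $\germ pX$ satisfies the hypotheses of \cref{lowgenus lz} and is therefore smooth, which forces $\Sing X = \emptyset$.

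The heart of the matter is to bound the genus of each germ. Almost homogeneity means precisely that $\T X$ is globally generated at a general point, so the evaluation of global vector fields yields a generically surjective morphism $\O X^{\oplus N} \to \T X$; in particular $\T X$ is generically nef. Choosing a log resolution $f \from Y \to X$ with exceptional locus $E = \Exc(f)$, the global vector fields lift to logarithmic vector fields on $(Y,E)$ by the standard functoriality of the tangent sheaf under resolutions, and they still span $\T Y(-\log E)$ at a general point of $Y$. I would then exploit this positivity on the resolution: taking determinants, the log-anticanonical class $-(K_Y + E) = \det \T Y(-\log E)$ becomes generically globally generated, hence effective, while the fact that $E$ is $f$-exceptional makes its intersection form negative definite. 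Feeding this compatibility into the computation of $\RR1.f.\O Y.$ around $E$, I expect to conclude that $\pg Xp \le 1$ for every singular point $p$.

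Granting this genus estimate, the genus-one case of \cref{lowgenus lz} applies to each $\germ pX$ and shows it to be smooth, whence $\Sing X = \emptyset$ and $X$ is smooth, as claimed. The hard part is exactly the genus bound. Almost homogeneity by itself permits quasi-homogeneous singularities, and these can have arbitrarily large genus in general — for instance cones over curves of high genus — so some further input is indispensable. What rules such examples out is the interplay of \emph{local freeness} and generic nefness: a cone over a high-genus curve has a strongly negative horizontal direction in its tangent sheaf and therefore fails to be generically nef. The delicate step is to convert this qualitative obstruction into the sharp inequality $\pg Xp \le 1$. Should the borderline value $\pg Xp = 2$ nevertheless occur, one would additionally have to verify the non-tree condition of the second case of \cref{lowgenus lz}; I expect this to follow from the same positivity, since a negative-definite tree of rational curves yields a rational homology sphere link that is incompatible with the effectivity of $-(K_Y+E)$ forced above.
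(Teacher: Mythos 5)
Your opening reduction is the right first move and agrees with the paper: normality via Lipman, finitely many singular points, and the goal of feeding each germ into \cref{lowgenus lz}. But the heart of your argument --- the bound $\pg Xp \le 1$ at every singular point --- is exactly what you do not prove, and it is moreover the wrong target: the mechanism actually available (the five-term Leray sequence for $f_* \O S$, where $f \from S \to X$ is the resolution; see \cref{leray}) only bounds the \emph{total} genus by $h^1(S, \O S) + h^2(X, \O X)$, and when $S$ is (a blowup of) a ruled surface over an elliptic curve this is $\le 2$, not $\le 1$. The borderline case of a single genus-$2$ point is genuinely on the table, which is precisely why \cref{lowgenus lz} has its second clause. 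Your fallback for that case is also not sound: you hope that effectivity of $-(K_Y+E)$ is ``incompatible'' with the exceptional locus being a tree of rational curves, but no such incompatibility holds in this generality --- the paper's \cref{ratl tree pg large} even exhibits trees of rational curves contracting to singularities of arbitrarily high genus. The actual exclusion of the genus-$2$-tree case (Case \textsf C in the proof of \cref{TX gen nef}, reused in the proof of \cref{X almost hom}) works differently: equality in the Leray bound forces $h^2(X,\O X) = 1$, hence $\can X \isom \O X$ by \cref{can X}, so $K_S$ is represented by a divisor supported on $\Exc(f)$; since a tree of rational curves maps to a point of the elliptic base, $K_S \cdot F = 0$ for a general ruling fibre $F$, contradicting $K_S \cdot F = -2$ by adjunction.

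There is a second, structural gap: your positivity apparatus (generic nefness, curves cut out by ample divisors) presupposes a projective $X$, whereas \cref{X almost hom} concerns arbitrary compact complex surfaces, possibly non-K\"ahler, on which no ample divisor exists. The paper bridges this by lifting almost homogeneity to the functorial resolution $S$ (using $\HH0.S.\T S. = \HH0.X.\T X.$) and invoking Potters' classification of smooth almost homogeneous compact surfaces; this is what supplies the needed control of $h^1(S,\O S)$ case by case, and it is also what handles the genuinely non-projective possibilities --- abelian Hopf surfaces and tori --- where the argument is not a genus bound at all but the absence of negative curves ($b_2(S) = 0$, resp.\ homogeneity), forcing $f$ to be an isomorphism. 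Without some classification input of this kind, neither your genus estimate nor your appeal to intersection numbers can get off the ground, so as it stands the proposal has a genuine gap at its central step.
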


An immediate consequence is

\begin{cor}[Global LZ conjecture, II] \label{TX trivial}
Let $X$ be a compact complex surface such that $\T X \isom \O X^{\oplus 2}$.
Then~$X$ is a complex $2$-torus.
\end{cor}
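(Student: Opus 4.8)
The plan is to bootstrap off \cref{X almost hom}. First I would observe that the hypothesis $\T X \isom \O X^{\oplus 2}$ makes $\T X$ not merely locally free but globally generated: the two tautological sections of $\O X^{\oplus 2}$ generate the sheaf at every point. In particular $\T X$ is globally generated at some point, so by the characterization of almost homogeneity recalled just above the corollary, $X$ is almost homogeneous. \Cref{X almost hom} then applies and shows that $X$ is smooth. We are thus reduced to the classical statement that a \emph{smooth} connected compact complex surface with holomorphically trivial tangent bundle is a $2$-torus.

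For the smooth case I would argue directly via the trivializing frame. Write $v_1, v_2 \in \HH0.X.\T X.$ for the global frame coming from the isomorphism, and let $\omega_1, \omega_2 \in \HH0.X.\Omegap X1.$ be the dual coframe, so that $\omega_i(v_j) = \delta_{ij}$ and $\omega_1 \wedge \omega_2$ is a nowhere-vanishing holomorphic $2$-form. Since $X$ is compact and connected, $\HH0.X.\O X. = \C$, so the Lie bracket of the frame is a \emph{constant} combination $[v_1, v_2] = a\,v_1 + b\,v_2$ with $a, b \in \C$. Because the values $\omega_i(v_j)$ are constant, the Cartan formula reduces to $\d\omega(v_1,v_2) = -\omega([v_1,v_2])$, giving $\d\omega_1 = -a\,\omega_1\wedge\omega_2$ and $\d\omega_2 = -b\,\omega_1\wedge\omega_2$.

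The key input is that on a compact complex surface every global holomorphic $1$-form is $\d$-closed. Granting this, $\d\omega_1 = \d\omega_2 = 0$ forces $a = b = 0$, so the frame commutes. The fields $v_1, v_2$ are complete because $X$ is compact, hence they integrate to a holomorphic action of $\C^2$ on $X$; as $v_1, v_2$ frame the tangent bundle, this action is locally free with open orbits, and connectedness of $X$ makes it transitive with discrete stabilizer $\Lambda$. Therefore $X \isom \C^2/\Lambda$, and compactness forces $\Lambda$ to be a lattice of rank $4$, so $X$ is a complex $2$-torus, as claimed.

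I expect the only genuine subtlety to be the closedness of holomorphic $1$-forms, which fails in higher dimension for non-\kahler manifolds but does hold for compact complex surfaces. An alternative that sidesteps it is to invoke Wang's structure theorem, writing $X = G/\Gamma$ for a two-dimensional complex Lie group $G$ and a cocompact lattice $\Gamma$, and then to rule out the non-abelian $G$: its commutator subgroup $\C$ would be acted on by conjugation through scalings $e^a$, which preserve a lattice only when $\re a = 0$, incompatible with the image of $\Gamma$ being a rank-$2$ lattice in $G/[G,G] \isom \C$. Either route yields that the Lie algebra of vector fields is abelian, which is the crux of the argument.
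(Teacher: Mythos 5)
Your proof is correct, but after the shared first step it takes a genuinely different route from the paper's. The reduction to the smooth case is exactly the paper's move: global triviality gives global generation, hence almost homogeneity, and \cref{X almost hom} yields smoothness. From there the paper is done in one line: since $X$ is smooth, the functorial resolution is an isomorphism, so $X$ itself appears in Potters' list, and \labelcref{pot torus} is the only entry with trivial tangent sheaf (the paper also notes one could instead cite the Kodaira--Enriques classification). You instead re-prove the smooth case from scratch: writing $[v_1, v_2] = a\,v_1 + b\,v_2$ with \emph{constant} coefficients (maximum principle, $\HH0.X.\O X. = \C$), applying the Cartan formula to get $\d\omega_1 = -a\,\omega_1 \wedge \omega_2$ and $\d\omega_2 = -b\,\omega_1 \wedge \omega_2$ --- which is legitimate since each $\d\omega_i$ is a holomorphic $2$-form and $\omega_1 \wedge \omega_2$ trivializes $\can X$ --- and then invoking the classical fact that holomorphic $1$-forms on a compact complex surface are $\d$-closed to conclude the frame commutes, integrates to a transitive $\C^2$-action, and exhibits $X \isom \C^2/\Lambda$. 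This trades the classification input (Potters' list) for a single piece of surface theory, the closedness of holomorphic $1$-forms, which you correctly identify as the surface-specific crux (it is precisely what excludes Iwasawa-type examples in higher dimension); your argument is self-contained and complete, including the standard points that open orbits plus connectedness force transitivity and that a discrete cocompact subgroup of $\C^2$ is a rank-$4$ lattice. The only soft spot is in your \emph{alternative} sketch via Wang's theorem: the claims that $\Gamma \cap [G,G]$ is a lattice in $[G,G] \isom \C$ (so that conjugation preserves something discrete) and that the image of $\Gamma$ in $G/[G,G] \isom \C$ is a rank-$2$ lattice require Mostow's structure theory of lattices in solvable Lie groups, not just discreteness and cocompactness; a quicker way to rule out the non-abelian $G$ is to note it is not unimodular (the adjoint action has nonzero trace), hence admits no lattice at all. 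Since that route is offered only as a backup, it does not affect the correctness of your main argument.
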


\begin{rem-plain}
In the above corollaries, we do not have to assume explicitly that $X$ is normal since this is automatic by~\cite[Thm.~3]{Lip65}.
\end{rem-plain}

\begin{rem-plain}
The almost homogeneous \emph{smooth} compact complex surfaces have been classified by Potters~\cite{PottersAlmostHomogeneous}.
Also, a compact \kahler \emph{manifold} (of arbitrary dimension) with trivial tangent bundle is necessarily a complex torus by~\cite[Cor.~2]{Wang54}.
This fails if the \kahler condition is dropped, the historically first example being the Iwasawa manifold.
It also fails in positive characteristic~\cite{MehtaSrinivas87}.
\end{rem-plain}

\begin{rem-plain}
After submission of this paper, the author learned from an anonymous referee that \cref{TX trivial} can also be obtained as a direct consequence of~\cite[Cor.~2]{OeljeklausRichthofer88}.
\end{rem-plain}

If $X$ is projective, we can weaken the assumptions on $\T X$ further.
Recall that a vector bundle $\sE$ on a normal projective variety $X$ of dimension $n$ is said to be \emph{generically nef (with respect to \emph{some} polarization)} if there exist ample line bundles $H_1, \dots, H_{n-1}$ on $X$ with the following property:
Let $C \subset X$ be a curve cut out by general elements of the linear system $|m_i H_i|$, for $m_i \gg 0$.
Then the restriction $\sE|_C$ is nef.
\emph{Generic ampleness} is defined similarly.

\begin{cor}[Global LZ conjecture, III] \label{TX gen nef}
Let $X$ be a complex-projective surface such that $\T X$ is locally free and generically nef.
Then $X$ is smooth.
More precisely, one of the following holds.
\begin{enumerate}
\item \label{P2} $X \isom \PP2$.
\item \label{ratl ruled} $X$ has a surjective birational morphism onto a rational ruled surface.
\item \label{ruled ell} $X$ has a surjective birational morphism onto a ruled surface over an elliptic curve $C$ such that all fibres of the map $X \to C$ are reduced.
\item \label{abelian} $X$ is an abelian or a bi-elliptic surface.
\item \label{K3} $X$ is a projective K3 surface or an Enriques surface.
\end{enumerate}
Conversely, for the surfaces in the above list, the tangent bundle is: \labelcref{P2} ample, \labelcref{ratl ruled} generically ample, \labelcref{abelian} nef, \labelcref{ruled ell} and \labelcref{K3} generically nef.
\end{cor}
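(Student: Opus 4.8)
The plan is to deduce smoothness by reducing to \cref{lowgenus lz}, and then to classify the resulting smooth surface by the Enriques--Kodaira trichotomy, at each stage exploiting the single inequality that generic nefness forces on the canonical class. First I would fix an ample $H$ and a smooth general $C \in |mH|$ ($m\gg0$) with $\T X|_C$ nef and $C \cap \Sing X = \emptyset$. Writing $\pi\from \wt X \to X$ for the minimal resolution, the curve $\wt C \defn \pi\inv(C)$ is isomorphic to $C$, lies in the smooth locus, and satisfies $\wt C \equiv \pi^*(mH)$; since $\T X|_C = \T{\wt X}|_{\wt C}$ is a nef rank-two bundle, its determinant has nonnegative degree, so $-K_{\wt X}\cdot \pi^* H \ge 0$, and by the projection formula $K_X \cdot H \le 0$. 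Intersecting $K_{\wt X}$ with the nef and big class $\pi^* H$ then rules out $\kappa = 2$ (where $K_{\wt X}$ is big, hence has strictly positive product with a nef and big class) and $\kappa = 1$ (where the pullback of the nef, nonzero canonical class of the minimal model already has positive product with $\pi^* H$). Hence $\kappa(X) \le 0$.

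The hard part will be to upgrade this to genuine smoothness. Since $\T X$ is free by hypothesis, it suffices by \cref{lowgenus lz} to show that every singular germ has $\pg{X}{x} \le 1$, or $\pg{X}{x} = 2$ with non-tree exceptional locus; then \cref{lowgenus lz} contradicts $X$ being singular. The essential difficulty is that generic nefness is tested only on curves avoiding $\Sing X$, so the global hypothesis must be transported to a local invariant. I would do this through the resolution, using the identity $\sum_x \pg{X}{x} = \chi(\O X) - \chi(\O{\wt X})$ together with $\kappa(\wt X) \le 0$ and Noether's formula to bound the total genus, and arguing that a singularity of high genus (equivalently, one whose fundamental cycle has large arithmetic genus) would force $-K_X$ to violate the movable-curve positivity encoded by generic nefness. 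This global-to-local transfer is the crux of the whole corollary, and it is exactly the point where \cref{lowgenus lz} --- rather than the weaker statement for log canonical singularities --- is needed, since it must also absorb the genus-two non-tree case.

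With $X$ smooth and $\kappa(X)\le 0$ in hand, the classification is largely bookkeeping. If $\kappa = 0$, then $X$ is minimal: a non-minimal model would have $K_X \equiv \sum E_i$ effective and nonzero (the minimal model having numerically trivial canonical class), contradicting $K_X\cdot H \le 0$; hence $X$ is abelian or bi-elliptic \labelcref{abelian}, or K3 or Enriques \labelcref{K3}. If $\kappa = -\infty$, then $X$ is rational --- landing in \labelcref{P2} when $X \isom \PP2$ and in \labelcref{ratl ruled} otherwise, since every other rational surface is a blowup of a Hirzebruch surface $\mathbb F_n$ --- or irrationally ruled over a curve $B$. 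In the latter case the differential of the ruling $f$ exhibits $f^*\T B$ as a quotient of $\T X$, so testing nefness on a general horizontal curve gives $\deg \T B \ge 0$, forcing $B$ elliptic; the residual positivity in the vertical direction is what selects the reduced-fibre surfaces of \labelcref{ruled ell}.

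Finally I would verify the converse positivity claims class by class. For \labelcref{P2}, the Euler sequence exhibits $\T{\PP2}$ as a quotient of $\O{\PP2}(1)^{\oplus 3}$, hence ample. For \labelcref{ratl ruled}, combining the relative Euler sequence of the ruling with the positivity of the base shows $\T X|_C$ is ample for general $C$, i.e.\ generic ampleness. For \labelcref{abelian}, the tangent bundle of an abelian surface is trivial and hence nef, and a bi-elliptic surface inherits nefness as an \'etale quotient of an abelian surface. For \labelcref{K3} generic nefness follows from Miyaoka's generic semipositivity of $\Omegap X1$ together with $K_X$ numerically trivial, and for \labelcref{ruled ell} from the explicit ruling with reduced fibres. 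The only subtlety here is pinning down the sharp adjective in each case (ample versus generically ample versus nef versus merely generically nef), which I would settle by exhibiting, in each borderline class, a curve on which $\T X$ fails the next stronger property.
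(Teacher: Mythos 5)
Your smoothness step has a genuine hole, and it sits exactly at the paper's critical case. Your total-genus bound --- whether obtained via the identity $\sum_{x} \pg Xx = \chi(\O X) - \chi(\O S)$ for the minimal resolution $f \from S \to X$, which carries the same information as the Leray five-term sequence of \cref{leray} --- gives at best $\sum_x \pg Xx \le 2$ (and even this needs $p_g(S) = 0$, i.e.\ $\kappa(S) = -\infty$, which does not follow from $\kappa(X, K_X) \le 0$ alone; \cref{kappa S} derives it in the case $\kappa(X, K_X) = 0$ from $K_S = f^* K_X + E$ with $E \lneq 0$ anti-effective, using that $E = 0$ would make the singularities canonical and hence $X$ smooth). \cref{lowgenus lz} then disposes of one or two genus-one points and of a genus-two point with non-tree exceptional locus, but it says nothing about a single genus-two singularity whose exceptional locus \emph{is} a tree of rational curves --- the hypothesis in \labelcref{genus2} explicitly excludes that configuration. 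Your proposed remedy, that ``a singularity of high genus would force $-K_X$ to violate the movable-curve positivity,'' cannot work as stated, and not merely for lack of detail: \cref{ratl tree pg large} of the paper exhibits singularities of arbitrarily large genus obtained by contracting trees of rational curves, and generic nefness is tested only on curves missing $\Sing X$ entirely, so no local statement about the germ can be extracted from it. The exclusion must be global.

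What the paper actually does for this case (its Case \textsf C) is an equality analysis in the five-term sequence: if $\hh0.X.{\RR1.f.\O S.}. = 2$, then necessarily $\hh1.S.\O S. = 1$ \emph{and} $\hh2.X.\O X. = 1$ simultaneously. The first forces $S$ to be a blowup of a ruled surface $\pi \from S \to C$ over an \emph{elliptic} curve --- via the $(\d\pi)|_{H_S}$ argument, which you invoke only later for the classification of smooth $X$ but which must already be run on the resolution $S$, using that a general ample curve misses $\Sing X$ and lifts isomorphically with $\T S|_{H_S} \isom \T X|_H$ nef. The second gives $\can X \isom \O X$ by \cref{kappa S}, so $K_S$ has a representative supported on $\Exc(f)$; since every component of $\Exc(f)$ is rational while $C$ is elliptic, $\pi(\Exc(f))$ is a point, whence $K_S \cdot F = 0$ for a general fibre $F$, contradicting $K_S \cdot F = -2$ from adjunction. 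This geometric contradiction is the missing idea in your outline. The remainder of your proposal --- the reduction $\kappa \le 0$, the classification for $\kappa = 0$ and $\kappa = -\infty$ including the reduced-fibre analysis over an elliptic base, and the class-by-class converse verifications --- matches the paper's argument in substance.
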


Under the stronger assumption of generic ampleness, the first part of \cref{TX gen nef} has been proved by Ballico~\cite[Thm.~1]{Ballico06}.

\subsection*{Acknowledgements}

The proof of \cref{TX trivial} is due to Hannah Bergner (previously we relied on the Kodaira--Enriques classification of surfaces).
Two anonymous referees have made useful suggestions concerning the presentation of the paper.

\section{Notation and basic facts}

We work over the field of complex numbers \C.
The \emph{sheaf of \kahler differentials} of an algebraic variety or reduced complex space $X$ is denoted $\Omegap X1$.
The \emph{tangent sheaf}, its dual, is denoted $\T X \defn \sHom \Omegap X1.\O X.$.
If $Z \subset X$ is a closed subset, then $\T X(-\log Z) \subset \T X$ denotes the subsheaf of derivations stabilizing the ideal sheaf of $Z$ (geometrically, this means vector fields tangent to $Z$ at every point of $Z$).
If $X$ is normal, its \emph{canonical sheaf} (the sheaf of reflexive differential $n$-forms) is denoted by $\can X$.
More generally, for any $p \in \N$, the \emph{sheaf of reflexive differential $p$-forms} is defined to be the double dual of $\bigwedge^p \Omegap X1$.
We denote it by $\Omegar Xp \defn \left( \bigwedge^p \Omegap X1 \right) \ddual$, and it is isomorphic to $i_* \big( \Omegap{X^\circ}p \big)$, where $i \from X^\circ \inj X$ is the inclusion of the smooth locus.

\begin{dfn}[Resolutions]
A \emph{resolution of singularities} of an algebraic variety or reduced complex space $X$ is a proper birational/bimeromorphic morphism $f \from Y \to X$, where $Y$ is smooth.
\begin{enumerate}
\item We say that the resolution is \emph{projective} if $f$ is a projective morphism.
That is, $f$ factors as $Y \inj X \x \PP n \to X$, where the first map is a closed embedding and the second one is the projection.
\item A \emph{log resolution} is a resolution whose exceptional locus $E = \Exc(f)$ is a simple normal crossing divisor, i.e.~a normal crossing divisor with smooth components.
\item A resolution is said to be \emph{strong} if it is an isomorphism over the smooth locus of $X$.
\end{enumerate}
\end{dfn}

\begin{fct}[Functorial resolutions] \label{functorial res}
Let $X$ be a normal algebraic variety or complex space.
Then there exists a projective\footnote{If $X$ is a complex space, then projectivity of $f$ is only guaranteed over compact subsets of $X$.} strong log resolution $f \from Y \to X$, called the \emph{functorial resolution}, such that $f_* \T Y(-\log E)$ is reflexive.
This means that for any vector field $\xi \in \Hnought U.\T X.$, $U \subset X$ open, there is a unique vector field
\[ \wt\xi \in \Hnought f\inv(U).\T Y(-\log E). \]
which agrees with $\xi$ wherever $f$ is an isomorphism.
\end{fct}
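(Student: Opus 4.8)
The plan is to establish the asserted bijection between vector fields on $X$ and logarithmic vector fields on $Y$; granting this, the natural comparison map $\T X \to f_*\T Y(-\log E)$ is an isomorphism, and since $\T X$ is a dual sheaf it is reflexive, so this is all that needs proving. As the assertion is local on $X$ and, by uniqueness, any two local lifts automatically glue, I would first reduce to the case where $X$ is a small Stein neighbourhood of a single point. \emph{Uniqueness} is then immediate: since $f$ is a strong resolution, $f\inv(\Reg X) \to \Reg X$ is an isomorphism onto a dense open subset with nowhere dense complement, and $\T Y(-\log E) \subseteq \T Y$ is torsion free; hence a logarithmic vector field is determined by its restriction to $f\inv(\Reg X)$, so $\xi$ admits at most one lift $\wt\xi$. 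Conversely, any section of $f_*\T Y(-\log E)$ restricts over $\Reg X$ to a vector field that extends to $\T X$ by reflexivity, so once existence is known the comparison map is also surjective, and the bijection follows.

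For \emph{existence} I would build $\wt\xi$ by integrating $\xi$ and transporting the resulting flow through the resolution. Working analytically on the Stein germ, the derivation $\xi$ of $\O X$ exponentiates: for $|t|$ small, $\phi_t^{\#}(g) = \sum_{n} (t^n/n!)\,\xi^n(g)$ defines a holomorphic one-parameter family of germ automorphisms $\phi_t$ of a smaller neighbourhood $U'$, with $\phi_0 = \id$ and derivative $\xi$ at $t = 0$. I would package the whole flow as a single automorphism $\Psi(t,x) = (t, \phi_t(x))$ of $\Delta \x U'$, where $\Delta$ is a small disk, and then invoke the defining functoriality of the canonical resolution. Since the projection $\Delta \x U' \to U'$ is smooth, functoriality for smooth morphisms identifies the functorial resolution of $\Delta \x U'$ with $\Delta \x f\inv(U')$ and its exceptional locus with $\Delta \x E$; general equivariance (functoriality for isomorphisms) then lifts $\Psi$ to an automorphism $\wt\Psi$ of $\Delta \x f\inv(U')$ covering $\Psi$ and preserving $\Delta \x E$. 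Writing $\wt\Psi(t,y) = (t, \wt\phi_t(y))$ produces a holomorphic family of automorphisms $\wt\phi_t$ of $f\inv(U')$, each lying over $\phi_t$, each preserving $E$, with $\wt\phi_0 = \id$. Differentiating $\wt\phi_t$ at $t = 0$ yields a holomorphic vector field $\wt\xi$; because every $\wt\phi_t$ stabilizes the ideal of $E$, the field $\wt\xi$ is tangent to $E$, i.e. $\wt\xi \in \Hnought f\inv(U').\T Y(-\log E).$, and over $\Reg X$ it agrees with $\xi$ by construction.

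The crux of the argument is the appeal to functoriality, and this is where I expect the main difficulty to lie. Two points must be handled with care: that the chosen resolution genuinely commutes with the smooth projection to $U'$ and is equivariant for the (non-fibrewise) automorphism $\Psi$ — this is exactly the strong functoriality built into the Hironaka--Bierstone--Milman--Villamayor--W\l{}odarczyk resolution and is the reason the \emph{functorial} resolution, rather than an arbitrary one, is indispensable here — and that $E$ is preserved \emph{with its reduced structure}, so that $\wt\xi$ lands in $\T Y(-\log E)$ rather than merely in $\T Y$. The holomorphic dependence of $\wt\phi_t$ on $t$, which legitimises differentiation, is not an extra input but a byproduct of packaging the flow into the single morphism $\wt\Psi$. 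Finally, to pass from this analytic-local construction to the algebraic (and global analytic) statement, I would note that $\T X \to f_*\T Y(-\log E)$ is a morphism of coherent sheaves, hence an isomorphism as soon as it is one after analytification; the local lifts above, glued by uniqueness, supply exactly this. A tempting purely first-order alternative replaces the flow by the automorphism of $X \x \Spec \C[\eps]/(\eps^2)$ attached to $\xi$, but it would require functoriality along the non-smooth base change $\C \to \C[\eps]$, which is considerably more delicate; the flow argument circumvents this by using only functoriality for smooth morphisms.
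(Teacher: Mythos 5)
The paper does not prove this Fact itself but defers to \cite[Thms.~3.36 and~3.45]{Kol07} for the functorial resolution and to \cite[Thm.~4.2]{GK13} for the reflexivity of $f_* \T Y(-\log E)$, and your argument --- integrating $\xi$ to a local analytic flow, packaging it as a single automorphism of $\Delta \x U'$, lifting it through the resolution via functoriality for smooth morphisms, and differentiating at $t = 0$ to obtain $\wt\xi \in \Hnought f\inv(U').\T Y(-\log E).$ --- is precisely the proof given in those references. So your proposal is correct and takes essentially the same approach as the paper's (cited) proof, including a correct identification of the genuinely delicate points: equivariance extracted from smooth functoriality, preservation of $E$ with its reduced structure, and the reduction of the algebraic statement to the analytic one.
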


\cref{functorial res} is proven in~\cite[Thms.~3.36 and~3.45]{Kol07}, but concerning the reflexivity of $f_* \T Y(-\log E)$ see also~\cite[Thm.~4.2]{GK13}.
If $X$ is a surface, the functorial resolution is also known as the \emph{minimal good resolution}.
Mapping $\xi \mapsto \wt\xi$ gives a sheaf map $\T X \bij f_* \T Y(-\log E)$, which by adjointness~\cite[Ch.~II, Sec.~5, p.~110]{Har77} can also be regarded as a map of sheaves on $Y$,
\[ f^* \from f^* \T X \lto \T Y(-\log E). \]
We will call both maps the \emph{pullback map on vector fields.}

\begin{dfn}[Geometric genus] \label{pg}
Let $\germ0X$ be a normal surface singularity, and let $f \from Y \to X$ be a resolution.
The \emph{(geometric) genus} $\pg X0$ is defined to be the dimension of the stalk $(\RR1.f.\O Y.)_0$.
Alternatively, choosing the representative $X$ of the germ $\germ0X$ to be Stein, $\pg X0 \defn \dim_\C \HH1.Y.\O Y.$.
This definition is independent of the choice of $f$.
\end{dfn}

\section{Proof of \cref{lowgenus lz}}

Let $\germ0X$ be a normal surface singularity and $f \from Y \to X$ a log resolution with reduced exceptional divisor $E \subset Y$.
Our proof relies on the following special case of a result by Steenbrink and van Straten, which in turn ultimately stems from the Steenbrink vanishing theorem~\cite[Thm.~2.b)]{Ste85}.

\begin{thm}[\protect{\cite[Cor.~1.4]{SvS85}}] \label{SvS}
The map
\[ \factor{\Omegar X1}{f_* \Omegap Y1} \xrightarrow{\quad\d\quad} \factor{\can X}{f_* \can Y(E)} \]
induced by the exterior derivative is injective. \qed
\end{thm}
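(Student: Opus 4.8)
The plan is to reduce both sides to the behaviour of the pullback $f^*\sigma$ of a reflexive $1$-form and to locate exactly where the global geometry of $f$ must enter. First, $\bar\d$ is well defined: on the smooth surface $Y$ one has $\d\,\Omegap Y1 \subseteq \Omegap Y2 = \can Y \subseteq \can Y(E)$, hence $\d(f_*\Omegap Y1) \subseteq f_*\can Y(E)$. Both quotient sheaves are supported at the singular point and have finite length, so I may argue with stalks. Unwinding the definitions, injectivity of $\bar\d$ becomes the following extension statement: if $\sigma \in \Omegar X1$ satisfies $\d(f^*\sigma) \in H^0(\can Y(E))$ --- that is, the $2$-form $\d(f^*\sigma)$, holomorphic on $Y\setminus E$, extends with pole order at most one along $E$ --- then $f^*\sigma$ itself is holomorphic, i.e. $\sigma \in f_*\Omegap Y1$.

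I would establish this in two steps. The first, easier half is the vanishing of residues. Choosing a Stein representative $X$, pushing forward the residue sequence $0 \to \Omegap Y1 \to \Omegap Y1(\log E) \xrightarrow{\res} \bigoplus_i \O{E_i} \to 0$ and using $H^0(E_i, \O{E_i}) = \C$ (each $E_i$ being a smooth component of the SNC divisor), the residues of a global logarithmic $1$-form land in $\bigoplus_i \C$, and the connecting homomorphism sends the $i$-th generator to $c_1(\O Y(E_i)) \in R^1 f_*\Omegap Y1$. Since $f$ resolves a normal surface singularity, the intersection matrix $(E_i\cdot E_j)$ is negative definite, hence nondegenerate; this makes these classes linearly independent, so the connecting map is injective and the residue map is zero. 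Thus $f_*\Omegap Y1(\log E) = f_*\Omegap Y1$: a reflexive form that pulls back to a logarithmic form in fact pulls back to a holomorphic one. The second half is to deduce from $\d(f^*\sigma) \in \can Y(E)$ that $f^*\sigma$ has at worst logarithmic poles to begin with; combined with the first half this yields $\sigma \in f_*\Omegap Y1$. (Equivalently, one can organize everything through the snake lemma applied to the two defining short exact sequences and the vertical exterior derivatives, reducing injectivity of $\bar\d$ to a surjectivity statement on kernels and an injectivity statement on cokernels, both fed by the same vanishing.)

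The main obstacle is precisely this second half, the pole-order bound, and it is genuinely global: the meromorphic form $\d(z^{-k})$ has vanishing differential yet a pole of order $k+1$, so the implication ``small differential $\Rightarrow$ small pole'' is simply false in local analytic terms. What rescues the statement is that such a local potential $z^{-k}$ cannot arise from $X$: by normality a function holomorphic on $X\setminus\{0\}$ extends across the singular point, so a genuine high-order pole along $E$ has no reflexive source downstairs. Encoding this obstruction cohomologically and proving that it vanishes is exactly the role of the Steenbrink vanishing theorem \cite[Thm.~2.b)]{Ste85}; for surfaces it rests on Grauert--Riemenschneider vanishing $R^1 f_*\can Y = 0$ together with the nondegeneracy of the intersection form used above. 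This Hodge-theoretic input is the crux, the remaining steps being the formal bookkeeping indicated above.
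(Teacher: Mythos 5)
The paper contains no proof of \cref{SvS} for you to match: the \qed{} directly after the statement marks it as quoted wholesale from \cite[Cor.~1.4]{SvS85}, the surrounding text only recording that it ``ultimately stems from'' Steenbrink vanishing. Measured against that source, your framing is largely right. Well-definedness is as you say; your Step 1 --- residues of a global logarithmic $1$-form are constants on the compact components $E_i$, the connecting map of the pushed-forward residue sequence sends the $i$-th generator to $\cc1{\O Y(E_i)}$, and these classes are linearly independent because the intersection matrix $(E_i \cdot E_j)$ is negative definite --- is the correct and standard argument for $f_* \Omegap Y1(\log E) = f_* \Omegap Y1$; and you correctly isolate the crux (the pole bound: $\d(f^*\sigma) \in \can Y(E)$ should force at worst logarithmic poles on $f^*\sigma$) together with the reason it admits no local proof, via $\d(z^{-k})$.

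The gap is that this crux is asserted, not proved, and the one sentence you offer in its place points at the wrong mechanism. Steenbrink's theorem \cite[Thm.~2.b)]{Ste85} kills $R^q f_* \bigl( \Omegap Yp(\log E)(-E) \bigr)$ only in the range $p + q > n$; on a surface the sole nontrivial instance is $(p,q) = (2,1)$, and since $\Omegap Y2(\log E)(-E) \isom \can Y$ it there degenerates to exactly Grauert--Riemenschneider vanishing. The instance your Step 2 would actually want, $R^1 f_* \bigl( \Omegap Y1(\log E)(-E) \bigr) = 0$ --- by duality the statement that all reflexive $1$-forms extend with log poles --- lies outside the covered range and is \emph{false} for general surface germs: were it true, your own Step 1 would upgrade every reflexive $1$-form to a holomorphic one on $Y$, the source of the map in \cref{SvS} would vanish identically, and the theorem (together with the paper's whole genus analysis) would be empty; concretely, on suitable cones over curves of genus $\ge 2$, sections of $\Omegap C1 \tensor L\inv$ yield reflexive $1$-forms with non-logarithmic poles along $E$, and it is precisely under a log canonical hypothesis that \cite{GK13} establish such a vanishing. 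Moreover, GR vanishing and negative definiteness are the very two inputs already consumed by your Step 1, and nothing in your outline couples them to the hypothesis on $\d(f^*\sigma)$; so ``formal bookkeeping'' from those inputs cannot produce the pole bound. The actual derivation occupies Section~1 of \cite{SvS85} and runs through the mixed Hodge theory of the punctured neighbourhood (the logarithmic de Rham complex of $(Y,E)$ computing the cohomology of the link); that analysis \emph{is} the theorem, and it is absent from your sketch, which is a correct frame around an unproved core.
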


\begin{lem}[Trees of rational curves] \label{ratl tree}
Let $C$ be a proper connected reduced curve with simple normal crossings.
The following are equivalent.
\begin{enumerate}
\item \label{C ratl tree} $C$ is a tree of rational curves, that is, every irreducible component of $C$ is isomorphic to $\PP1$ and the dual graph of $C$ does not contain any cycles.
\item \label{O_C van} $\HH1.C.\O C. = 0$.
\item \label{Q_C van} $\HH1.C.\Q. = 0$.
\end{enumerate}
\end{lem}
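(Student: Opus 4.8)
The plan is to reduce all three statements to two numerical identities and read off the equivalences from them. Write $\nu \from \wt C \to C$ for the normalization, so that $\wt C = \bigsqcup_{i=1}^{v} C_i$ is the disjoint union of the smooth proper irreducible components; let $g_i$ be the genus of $C_i$ and let $e$ be the number of nodes of $C$. The dual graph $G$ has $v$ vertices and $e$ edges, and since $C$ is connected its first Betti number is $b_1(G) = e - v + 1 \ge 0$. Condition \labelcref{C ratl tree} says exactly that $b_1(G) = 0$ and every $g_i = 0$: a connected graph is a tree iff $b_1 = 0$, and a smooth proper curve over $\C$ has genus $0$ iff it is isomorphic to $\PP1$. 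I would therefore aim to prove the two formulas
\[ \hh1.C.\O C. = b_1(G) + \sum_{i=1}^{v} g_i, \qquad \dim_\Q \HH1.C.\Q. = b_1(G) + 2\sum_{i=1}^{v} g_i. \]
Both right-hand sides are sums of nonnegative integers, so each vanishes precisely when $b_1(G)=0$ and all $g_i=0$, that is, precisely under \labelcref{C ratl tree}. Hence \labelcref{C ratl tree} $\Leftrightarrow$ \labelcref{O_C van} and \labelcref{C ratl tree} $\Leftrightarrow$ \labelcref{Q_C van}, which is the assertion.

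For the first identity I would use the normalization sequence
\[ 0 \lto \O C \lto \nu_* \O{\wt C} \lto \sQ \lto 0, \]
whose cokernel $\sQ$ is a skyscraper sheaf supported at the $e$ nodes with a one-dimensional stalk at each (the local model of an ordinary node), so $\sQ$ has length $e$. Additivity of Euler characteristics gives $\chi(\O C) = \chi(\O{\wt C}) - e = \sum_i (1 - g_i) - e$, and since $C$ is connected and reduced we have $\hh0.C.\O C. = 1$. Substituting $b_1(G) = e - v + 1$ yields the first formula.

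For the second identity I would realize $C$ topologically as a pushout. Let $N \subset C$ be the set of $e$ nodes and $\wt N = \nu\inv(N) \subset \wt C$ the set of its $2e$ preimages; then $C$ is the pushout of $\wt C \hookleftarrow \wt N \to N$, where $\wt N \to N$ is the two-to-one gluing map and $\wt N \inj \wt C$ is a closed cofibration, so the square is a homotopy pushout. The associated Mayer--Vietoris sequence in rational cohomology is
\[ \cdots \lto \HH k.C.\Q. \lto \HH k.\wt C.\Q. \oplus \HH k.N.\Q. \lto \HH k.\wt N.\Q. \lto \HH{k+1}.C.\Q. \lto \cdots \]
As $N$ and $\wt N$ are finite, only their degree-$0$ terms survive, and the low-degree part collapses to
\[ 0 \lto \HH0.C.\Q. \lto \HH0.\wt C.\Q. \oplus \HH0.N.\Q. \lto \HH0.\wt N.\Q. \lto \HH1.C.\Q. \lto \HH1.\wt C.\Q. \lto 0, \]
with term dimensions $1$, $v+e$, $2e$, the unknown $\dim_\Q \HH1.C.\Q.$, and $2\sum_i g_i$ respectively. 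Setting the alternating sum equal to zero and again using $b_1(G) = e - v + 1$ gives the second formula.

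I expect the main obstacle to lie entirely on the topological side: one must justify that $C$, a possibly very singular reducible curve, is genuinely the homotopy pushout of $\wt C$ and $N$ along $\wt N$, so that the Mayer--Vietoris sequence above is available, and then check carefully which cohomology groups survive in low degrees. The coherent computation and the final comparison of the two formulas are then routine bookkeeping.
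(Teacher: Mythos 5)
Your proposal is correct, and your treatment of \labelcref{C ratl tree} $\gdw$ \labelcref{O_C van} is exactly the paper's: the normalization sequence $0 \to \O C \to \nu_* \O{C^\nu} \to \bigoplus_{P \in \Sing C} \underline \C_P \to 0$ (skyscrapers of length one at the nodes), additivity of Euler characteristics, and the identification of the correction term with $b_1(G) = e - v + 1 \ge 0$. Where you diverge is \labelcref{C ratl tree} $\gdw$ \labelcref{Q_C van}: the paper never leaves sheaf theory, running the identical argument with the constant-sheaf sequence $0 \to \Q_C \to \nu_* \Q_{C^\nu} \to \bigoplus_{P \in \Sing C} \underline \Q_P \to 0$, whose exactness is checked on stalks and whose long exact cohomology sequence --- using that $\nu$ is finite, so $\HH k.C.\nu_* \Q_{C^\nu}. \isom \HH k.C^\nu.\Q.$ --- is precisely your Mayer--Vietoris sequence with the redundant $\HH0.N.\Q.$ summand cancelled against the diagonal copy inside $\HH0.\wt N.\Q.$. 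In particular, the step you single out as the main obstacle (that $C$ is the homotopy pushout of $\wt C \hookleftarrow \wt N \to N$) can be avoided entirely; it is, for the record, also easy to justify, since $\nu$ is a closed surjection of compact Hausdorff spaces, hence a quotient map, and $\wt N \inj \wt C$ is a cofibration. Both routes produce the same two formulas $\hh1.C.\O C. = b_1(G) + \sum_i g_i$ and $\dim_\Q \HH1.C.\Q. = b_1(G) + 2 \sum_i g_i$, and your observation that these right-hand sides vanish simultaneously, exactly under \labelcref{C ratl tree}, is the same bookkeeping the paper performs. The trade-off: the sheaf-theoretic packaging needs no point-set or homotopy-theoretic care, while your version makes transparent that \labelcref{Q_C van} is a purely topological condition (which is what \cref{ratl hom sphere} exploits) and even records the finer fact that $\hh1.C.\O C.$ and $\frac{1}{2}\dim_\Q \HH1.C.\Q.$ differ only in the weight given to the geometric genera of the components.
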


\begin{proof}
Write $C = \bigcup_{i=1}^n C_i$ for the decomposition into irreducible components and let $\nu \from C^\nu = \coprod_{i=1}^n C_i \to C$ be the normalization map.
By the long exact sequence associated to
\[ 0 \lto \O C \lto \nu_* \O{C^\nu} \lto \bigoplus_{P \in \Sing C} \underline \C_P \lto 0, \]
we deduce that
\begin{equation} \label{421}
\hh1.C.\O C. = \sum_{i=1}^n \hh1.C_i.\O{C_i}. + \underbrace{\#(\Sing C) - n + 1}_{\text{$\ge 0$, $C$ connected}}.
\end{equation}
Hence if $\hh1.C.\O C.$ is zero, then each $C_i$ is rational and $\#(\Sing C) = n - 1$, that is, $C$ is a tree.
Conversely, if $C$ is a tree of rational curves, then $\hh1.C.\O C. = 0$ by~\labelcref{421}.
This shows that \labelcref{C ratl tree} $\gdw$ \labelcref{O_C van}.

For ``\labelcref{C ratl tree} $\gdw$ \labelcref{Q_C van}'', one argues similarly, using instead the sequence
\[ 0 \lto \Q_C \lto \nu_* \Q_{C^\nu} \lto \bigoplus_{P \in \Sing C} \underline \Q_P \lto 0, \]
where $\Q_C$ denotes the constant sheaf on $C$ with values in \Q.
\end{proof}

\PreprintAndPublication{
\begin{prp}[Exceptional trees of rational curves] \label{ratl hom sphere}
Let $\germ0X$ be a normal surface singularity.
The following are equivalent.
\begin{enumerate}
\item \label{exists ratl tree} There exists a log resolution $f \from Y \to X$ such that $E$ is a tree of rational curves.
\item \label{forall ratl tree} For any log resolution $f \from Y \to X$, $E$ is a tree of rational curves.
\item \label{link} The link $L$ of $\germ0X$ is a rational homology sphere.
\end{enumerate}
\end{prp}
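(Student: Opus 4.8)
The plan is to prove, for \emph{any} fixed log resolution $f \from Y \to X$, that $E$ is a tree of rational curves if and only if the link $L$ is a rational homology sphere. Since condition~\labelcref{link} visibly does not depend on the choice of resolution, this single equivalence delivers all of \labelcref{exists ratl tree}~$\Leftrightarrow$~\labelcref{forall ratl tree}~$\Leftrightarrow$~\labelcref{link} at once: among the first two the only implication with content is \labelcref{exists ratl tree}~$\Rightarrow$~\labelcref{forall ratl tree}, and that is precisely the resolution-independence obtained by routing both conditions through the intrinsic~\labelcref{link}.

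First I would identify $L$ with the boundary of a regular (tubular) neighbourhood $U$ of $E$ inside $Y$. As $U$ deformation retracts onto $E$, one has $\Hh k.U.\Q. \isom \Hh k.E.\Q.$, and since $E$ is a connected reduced curve with simple normal crossings and, say, $n$ irreducible components, its rational homology is concentrated in degrees $0,1,2$ with ranks $1$, $b_1(E)$ and $n$, vanishing in higher degrees for dimension reasons. The core of the argument is then the long exact homology sequence of the pair $(U, L)$, combined with Lefschetz duality $\Hh k.{(U,L)}.\Q. \isom \HH{4-k}.U.\Q. \isom \HH{4-k}.E.\Q.$ for the compact oriented $4$-manifold $U$ with boundary $L$; this re-expresses every relative term through the (co)homology of $E$.

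The crucial point is that under these identifications the map $\Hh 2.U.\Q. \to \Hh 2.{(U,L)}.\Q.$ becomes the intersection form on the exceptional components $(C_i \cdot C_j)$, which by the Grauert--Mumford negative-definiteness criterion is nondegenerate, hence an isomorphism over $\Q$. Threading this isomorphism through the long exact sequence kills the two adjacent boundary maps, and a short diagram chase then yields $\Hh 1.L.\Q. \isom \Hh 2.L.\Q. \isom \Q^{b_1(E)}$ (recovering, in passing, Poincar\'e duality on the closed oriented $3$-manifold $L$). Consequently $L$ is a rational homology sphere exactly when $b_1(E) = 0$, that is, when $\HH 1.E.\Q. = 0$; by \cref{ratl tree} this holds precisely when $E$ is a tree of rational curves, completing the argument.

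The single genuinely substantial ingredient is the negative definiteness of the intersection matrix, which is what forces the central map to be an isomorphism; once that is in hand, everything else is a formal manipulation of standard topological exact sequences. I expect no real difficulty beyond carefully bookkeeping the Lefschetz-duality identifications and confirming that the degree-two boundary operator is indeed the intersection form.
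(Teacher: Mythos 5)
Your proposal is correct, and its overall skeleton coincides with the paper's: fix an arbitrary log resolution, show that $E$ is a tree of rational curves if and only if $\Hh1.L.\Q. = 0$, use \cref{ratl tree} to translate this into the tree condition, and let the resolution-independence of \labelcref{link} deliver \labelcref{exists ratl tree} $\gdw$ \labelcref{forall ratl tree}. The genuine difference is in how the comparison between $L$ and $E$ is established. The paper treats it as a black box: it cites Mumford for the statement that the natural map $\Hh1.L.\Z. \to \Hh1.E.\Z.$ is surjective with finite kernel, hence an isomorphism over $\Q$, and then only needs the remark that a compact connected orientable $3$-manifold with vanishing $\Hh1.L.\Q.$ is a rational homology sphere. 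You instead reprove the rational comparison from scratch: realize $L$ as the boundary of a tubular neighbourhood $U \simeq E$, apply Lefschetz duality $\Hh k.(U, L).\Q. \isom \HH{4-k}.U.\Q. \isom \HH{4-k}.E.\Q.$, and identify $\Hh2.U.\Q. \to \Hh2.(U, L).\Q.$ with the intersection matrix $(C_i \cdot C_j)$, nondegenerate by negative definiteness; the long exact sequence of the pair then yields $\Hh1.L.\Q. \isom \Hh1.E.\Q. \isom \Q^{b_1(E)}$ (and I checked the chase: injectivity of the central map kills $\Hh2.L.\Q. \to \Hh2.U.\Q.$, surjectivity kills $\Hh2.(U, L).\Q. \to \Hh1.L.\Q.$, and $\Hh1.(U, L).\Q. \isom \HH3.E.\Q. = 0$ closes it). This is precisely the content behind Mumford's cited result, so your argument is a correct, self-contained unpacking of the paper's citation: it buys transparency — only nondegeneracy of the intersection form over $\Q$ is actually used — and gives the connectedness and full rational homology of $L$ in passing, whereas the citation route is shorter and retains integral information (the finite kernel, controlled by the determinant of the intersection matrix). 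Two minor points to polish: negative definiteness of the exceptional intersection matrix is due to Du Val and Mumford, while Grauert's theorem is the converse direction (contractibility of negative definite configurations); and you should add a sentence justifying that the link is diffeomorphic to $\partial U$, which is standard since $f$ is an isomorphism over $X \setminus \set 0$, so preimages of small neighbourhoods of $0$ are regular neighbourhoods of $E$.
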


Recall that a $3$-manifold $M$ is said to be a rational homology sphere if
\[ \Hh i.M.\Q. \isom \Hh i.S^3.\Q. =
\begin{cases}
  \Q, & i = 0 \text{ or } 3, \\
  0,  & \text{otherwise}
\end{cases} \]
for all $0 \le i \le 3$.

\begin{proof}[Proof of \cref{ratl hom sphere}]
Let $f \from Y \to X$ be any log resolution, with $E = f\inv(0)$.
We have a natural continuous map $L \to E$.
By~\cite[p.~235]{Mumford61}, the induced map $\Hh1.L.\Z. \to \Hh1.E.\Z.$ is surjective with finite kernel.
Hence $\Hh1.L.\Q. \to \Hh1.E.\Q.$ is an isomorphism.
By \cref{ratl tree}, it follows that $E$ is a tree of rational curves if and only if $\Hh1.L.\Q. = 0$.
Since in any case $L$ is a compact connected orientable $3$-manifold, $\Hh1.L.\Q. = 0$ in turn is equivalent to $L$ being a rational homology sphere.
\end{proof}
}
{}

\begin{proof}[Proof of \cref{lowgenus lz}]
Let $\set{ v_1, v_2 }$ be a basis of $\T X$, i.e.~$v_1, v_2 \in \HH0.X.\T X.$ give an isomorphism $\O X^{\oplus 2} \bij \T X$.
Let $\set{ \alpha_1, \alpha_2 }$ be the dual basis of $\Omegar X1$, defined by $\alpha_i(v_j) = \delta_{ij}$.
Furthermore, we may and will assume that $f \from Y \to X$ is the functorial resolution.

\begin{clm} \label{dim1}
We have $\dim \factor{\can X}{f_* \can Y(E)} \le 1$.
\end{clm}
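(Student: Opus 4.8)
The plan is to pin down the length of $\can X / f_* \can Y(E)$ exactly and then read off the bound from the genus hypotheses. Because $f$ is a strong resolution, it is an isomorphism over $X \setminus \set 0$, so the sheaves $f_* \can Y$, $f_* \can Y(E)$ and $\can X$ all coincide there; moreover $f_* \can Y(E) \subseteq \can X$, since a local section of $\can Y(E)$ restricts to a holomorphic $2$-form on $X \setminus \set 0$ and hence extends to a reflexive form by reflexivity of $\can X = \Omegar X2$. Thus all three sit in a chain $f_* \can Y \subseteq f_* \can Y(E) \subseteq \can X$ of subsheaves of $\can X$ with finite-length skyscraper quotients at $0$, and the goal becomes the identity
\[ \dim_\C \factor{\can X}{f_* \can Y(E)} = \pg X0 - \hh1.E.\O E.. \]
Granting this, case \labelcref{genus1} gives $\pg X0 \le 1$ and hence a value $\le 1$; in case \labelcref{genus2} we have $\pg X0 = 2$ while $E$ is not a tree of rational curves --- a property independent of the chosen log resolution by \cref{ratl hom sphere}, and the functorial resolution is a log resolution --- so that $\hh1.E.\O E. \ge 1$ by \cref{ratl tree}, and the value is again $\le 1$.

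I would obtain the identity by computing the two successive colengths in the chain above. For the inner inclusion $f_* \can Y \subseteq f_* \can Y(E)$ I would use the adjunction sequence for the reduced simple normal crossings divisor $E$,
\[ 0 \lto \can Y \lto \can Y(E) \lto \can E \lto 0, \]
where $\can E \isom \can Y(E)|_E$ denotes the dualizing sheaf of $E$. Pushing forward and using the Grauert--Riemenschneider vanishing $\RR1.f.\can Y. = 0$ yields a short exact sequence $0 \to f_* \can Y \to f_* \can Y(E) \to f_* \can E \to 0$, so that $f_* \can Y(E)/f_* \can Y \isom f_* \can E$. Its stalk at $0$ is $\HH0.E.\can E.$, and since $E$ is proper, Serre duality on $E$ gives $\dim_\C \HH0.E.\can E. = \hh1.E.\O E.$.

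For the outer inclusion $f_* \can Y \subseteq \can X$ I would invoke the classical duality description of the geometric genus: Grothendieck duality for the proper birational morphism $f$, combined once more with $\RR1.f.\can Y. = 0$, produces a short exact sequence $0 \to f_* \can Y \to \can X \to D \to 0$ in which $D$ is the local dual of $\RR1.f.\O Y.$ and therefore has length $\pg X0$. (Here the vanishing of $\RR1.f.\can Y.$ is precisely what forces the relevant connecting map $\can X \to D$ to be surjective with kernel $f_* \can Y$.) Consequently $\dim_\C \can X/f_* \can Y = \pg X0$, and adding this to the inner colength $\hh1.E.\O E.$ gives the displayed identity.

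The genuine obstacle is this last duality computation, $\dim_\C \can X / f_* \can Y = \pg X0$; the remaining ingredients --- the adjunction sequence, Grauert--Riemenschneider vanishing, and the curve-theoretic dichotomy of \cref{ratl tree} --- are standard bookkeeping once it is in place. I note that only the inequality $\dim_\C \can X/f_* \can Y \le \pg X0$ is strictly needed for the bound, which may simplify matters if the exact duality statement turns out to be inconvenient to cite.
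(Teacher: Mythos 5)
Your proof is correct and follows essentially the same route as the paper: the same chain $f_* \can Y \subseteq f_* \can Y(E) \subseteq \can X$, the residue sequence together with Grauert--Riemenschneider vanishing to identify $\factor{f_* \can Y(E)}{f_* \can Y} \isom \HH0.E.\can E.$, Serre duality on the Cohen--Macaulay curve $E$, and \cref{ratl tree}, including the correct observation (via \cref{ratl hom sphere}) that the hypothesis in case~\labelcref{genus2} transfers to the functorial resolution. The duality computation $\dim_\C \factor{\can X}{f_* \can Y} = \pg X0$ that you flag as the one genuine obstacle is exactly what the paper handles by citing \cite[Prop.~4.45(6)]{KM98}, so nothing further is needed there.
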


\begin{proof}
Consider the short exact sequence
\[ 0 \lto \underbrace{\factor{f_* \can Y(E)}{f_* \can Y}}_{\ndef \sK} \lto \factor{\can X}{f_* \can Y} \lto \factor{\can X}{f_* \can Y(E)} \lto 0. \]
By~\cite[Prop.~4.45(6)]{KM98}, we have $\dim \factor{\can X}{f_* \can Y} = \pg X0$.
Hence in case~\labelcref{genus1}, we are done.
In case~\labelcref{genus2}, it suffices to show $\sK \ne 0$.
For this, consider the residue sequence
\[ 0 \lto \can Y \lto \can Y(E) \lto \can E \lto 0. \]
Since $\RR1.f.\can Y. = 0$ by Grauert--Riemenschneider vanishing~\cite[Thm.~2.20.1]{Kol07}, $\sK = \HH0.E.\can E.$.
This is Serre dual to $\HH1.E.\O E.$, since $E$ is Cohen--Macaulay.
By \cref{ratl tree}, the latter space is nonzero.
\end{proof}

By \cref{dim1}, the images of $\d\alpha_1$ and $\d\alpha_2$ in $\factor{\can X}{f_* \can Y(E)}$ are linearly dependent.
Possibly interchanging $\alpha_1$ and $\alpha_2$, we may assume that there is a relation
\[ \d\alpha_1 + \lambda \cdot \d\alpha_2 = 0 \in \factor{\can X}{f_* \can Y(E)} \]
for some $\lambda \in \C$.
This means that $\d(\alpha_1 + \lambda \alpha_2)$ extends to $Y$ with logarithmic poles.
Then by \cref{SvS}, $\alpha_1 + \lambda \alpha_2$ extends to $Y$ without poles.
Setting $v_2' \defn -\lambda v_1 + v_2$, the basis $\set{ v_1, v_2' }$ of $\T X$ has as its dual basis $\set{ \alpha_1 + \lambda \alpha_2, \alpha_2 }$.
Replacing $v_2$ by $v_2'$, we may assume the following.

\begin{awlog}
The reflexive $1$-form $\alpha_1$ extends to $Y$ without poles.
\end{awlog}

Now $v_i$ and $\alpha_1$ extend to $\wt v_i \in \HH0.Y.\T Y.$ and $\wt\alpha_1 \in \HH0.Y.\Omegap Y1.$, respectively.
We have that $\wt\alpha_1(\wt v_1) = 1$ on $Y \setminus E$, hence this holds on all of $Y$.
It follows that the vector field $\wt v_1$ does not have any zeros, since $\set{ \wt v_1 = 0 } \subset \big\{ \wt\alpha_1(\wt v_1) = 0 \big\} = \emptyset$.

But $\wt v_1 \in \HHbig 0.Y.\T Y(-\log E).$, that is, $\wt v_1$ is tangent to each irreducible component $E_i \subset E$ at every point of $E_i$.
In particular, $\wt v_1$ vanishes at the singular points of $E$.
It follows that $E$ is a smooth irreducible curve (or empty, in which case $\germ0X$ is smooth and we are done).
Furthermore it carries the nowhere vanishing vector field $\wt v_1|_E$, i.e.~$E$ is an elliptic curve.
Writing down the discrepancy formula
\[ K_Y = f^* K_X + a(E, X) \cdot E \]
and intersecting with $E$, we get $0 = (K_Y + E) \cdot E = (a(E, X) + 1) \cdot E^2$.
Hence $a(E, X) = -1$, as $E^2 < 0$, and thus $\germ0X$ is a log canonical singularity.
From here, there are several ways to conclude that $\germ0X$ is in fact smooth:

\begin{itemize}
\item For log canonical singularities, the Lipman--Zariski conjecture is known by~\cite[Thm.~1.1]{Dru13} or by~\cite[Cor.~1.3]{GK13}.
Note that although~\cite{GK13} is formulated in the algebraic setting, the proofs work verbatim for complex spaces.
Alternatively, we may appeal to Artin approximation~\cite[Thm.~3.8]{ArtinApproximation} in order to see that every normal surface singularity, being isolated, is in fact algebraic.
\item In the surface case, the above result is essentially contained in~\cite{SvS85}:
We have $\factor{\can X}{f_* \can Y(E)} = 0$ by the definition of log canonical singularities, and then \cref{SvS} tells us that all reflexive $1$-forms on $X$ extend to $Y$.
Now one may argue as in~\cite[(1.6)]{SvS85}.
\PreprintAndPublication{
\item Alternatively, there is also a completely elementary argument, which we give in \cref{elliptic lz} below.}
{\item Alternatively, one may also prove the following very special case of the Lipman--Zariski conjecture by a completely elementary argument:
\emph{Let $\germ0X$ be an $n$-dimensional normal isolated log canonical singularity such that for the functorial resolution $f \from Y \to X$, the exceptional locus is irreducible.
Then the Lipman--Zariski conjecture holds for $\germ0X$.}
Details are contained in the preprint version of this article, available at \href{https://arxiv.org/abs/1801.05753}{arxiv:1801.05753 [math.AG]}.
}
\end{itemize}

\noindent
Using either of these arguments, the proof of \cref{lowgenus lz} is finished.
\end{proof}

\PreprintAndPublication{
\begin{prp}[Elementary case of the LZ conjecture] \label{elliptic lz}
Let $\germ0X$ be an $n$-dimensional normal isolated log canonical singularity such that for the functorial resolution $f \from Y \to X$, the exceptional locus is irreducible.
Then the Lipman--Zariski conjecture holds for $\germ0X$.
\end{prp}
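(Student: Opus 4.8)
The plan is to combine the trivialization of $\T X$ with the log canonical hypothesis to force the restricted log tangent bundle along $E$ to be trivial, and then to contradict this using the negativity of the normal bundle of $E$. First I would fix an isomorphism $\T X \isom \O X^{\oplus n}$ with basis $v_1, \dots, v_n \in \Hnought X.\T X.$ and dual basis $\alpha_1, \dots, \alpha_n \in \Hnought X.\Omegar X1.$ determined by $\alpha_i(v_j) = \delta_{ij}$. Taking top exterior powers gives $\det \T X \isom \O X$, hence $\can X \isom \O X$; in particular $\germ0X$ is Gorenstein with $K_X \sim 0$, so the discrepancy formula reads $K_Y \sim a(E,X)\cdot E$ near $E$. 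By \cref{functorial res} each $v_i$ lifts to $\wt v_i \in \Hnought Y.\T Y(-\log E).$, and I set $\omega \defn \wt v_1 \wedge \dots \wedge \wt v_n$, a global section of $\det \T Y(-\log E) \isom \can Y^{-1}(-E)$. Since the $v_i$ frame $\T X$ over $\Reg X$, the section $\omega$ is nowhere zero on $Y \setminus E$; as $E$ is irreducible, its zero divisor equals $mE$ for some integer $m \ge 0$.

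The first key step is to pin down $m$ and the discrepancy. Dually, $\eta \defn \alpha_1 \wedge \dots \wedge \alpha_n$ generates $\can X \isom \O X$, so the rational section $f^*\eta$ of $\can Y$ vanishes to order exactly $a(E,X)$ along $E$. Pairing $f^*\eta$ against $\omega$ under $\can Y \tensor \can Y^{-1}(-E) \to \O Y(-E)$ produces a section whose restriction to $Y \setminus E$ is the constant $\det(\alpha_i(v_j)) = 1$. A local computation with a local equation $t$ of $E$, keeping track of the twist by $\O Y(-E)$, then yields the single numerical identity $a(E,X) + m + 1 = 0$. Since $\germ0X$ is log canonical we have $a(E,X) \ge -1$, while $m \ge 0$ by effectivity; hence $a(E,X) = -1$ and $m = 0$. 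Thus $\omega$ is nowhere vanishing, so $\wt v_1, \dots, \wt v_n$ form a frame: $\T Y(-\log E) \isom \O Y^{\oplus n}$ near $E$, and dually $\Omegap Y1(\log E)|_E \isom \O E^{\oplus n}$.

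It remains to derive a contradiction from a nonempty $E$. Here $E$ is an irreducible simple normal crossings divisor, hence smooth, and I would restrict the residue sequence to it, obtaining $0 \to \Omegap E1 \to \Omegap Y1(\log E)|_E \to \O E \to 0$. As $E = f\inv(0)$ is compact and connected, the surjection onto $\O E$ is given by a constant and therefore splits; so this sequence is split and its extension class in $\Ext1.\O E.\Omegap E1. \isom \HH1.E.\Omegap E1.$ vanishes. But that class is exactly $\cc{1}{\sN_{E/Y}}$. On the other hand, since $\germ0X$ is an isolated singularity and $E$ is the irreducible divisor contracted to $0$, Grauert's contractibility criterion shows that the conormal bundle $\O E(-E)$ is ample; in particular $E$ is projective and $\cc{1}{\sN_{E/Y}} \ne 0$ in $\HH1.E.\Omegap E1.$. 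This contradiction forces $E = \emptyset$, i.e.\ $\germ0X$ is smooth.

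I expect the delicate points to be, first, the order-of-vanishing bookkeeping in the pairing step: precisely because $a(E,X) = -1$, the form $f^*\eta$ acquires a genuine simple pole, and the $+1$ coming from the $\O Y(-E)$-twist is what distinguishes the log canonical boundary case from the canonical one. Second, and conceptually central, is the identification of the extension class of the restricted residue sequence with $\cc{1}{\sN_{E/Y}}$, which is exactly where the negativity of $E$ enters and defeats the triviality established in the previous step. For $n = 2$ this final step specializes to the statement that the restricted log tangent bundle of an elliptic exceptional curve with $E^2 < 0$ is the nonsplit Atiyah extension, recovering the surface picture; one should additionally dispose of the trivial case $n = 1$, where normality already gives smoothness.
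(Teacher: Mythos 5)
Your proposal is correct in substance and is a genuine streamlining of the paper's proof, except for one misattributed justification discussed below. The paper argues by a dichotomy on whether $\T E$ is globally generated: in either case it shows the pullback map $f^* \from f^* \T X \to \T Y(-\log E)$ fails to be surjective along $E$, so that its determinant $\O Y \to \O Y\big(-(K_Y+E)\big)$ vanishes on $E$ and forces $a(E,X) \le -2$, contradicting log canonicity; the globally generated case needs the extra input that the restricted residue sequence is non-split because its class is $\cc1{\O E(E)} \ne 0$. Your single identity $a(E,X) + m + 1 = 0$, where $mE$ is the zero divisor of $\omega = \wt v_1 \wedge \cdots \wedge \wt v_n$, i.e.\ of $\det(f^*)$, packages both cases at once: the paper's Case 1 (and the endgame of its Case 2) is precisely the assertion $m \ge 1$, hence $a(E,X) \le -2$. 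You then run the residue-sequence mechanism in the opposite direction: instead of using non-splitting to force non-surjectivity of $f^*$, you use $m = 0$ to trivialize $\Omegap Y1(\log E)\big|_E$, deduce splitting from compactness and connectedness of $E$, and contradict $\cc1{\O E(E)} \ne 0$ directly. This dispenses with the global-generation dichotomy and with \cref{gg det=0} altogether. Your local bookkeeping is right: with $E = \set{t = 0}$, pairing $g \, \d t \wedge \d y_2 \wedge \cdots \wedge \d y_n$ against $h \, (t\del_t) \wedge \del_{y_2} \wedge \cdots \wedge \del_{y_n}$ gives $ght$, which is the source of the $+1$; and the identification of the extension class with $\cc1{\sN_{E/Y}}$ is the paper's citation of \cite[Lemma~3.5]{GK13} together with restriction to $E$.

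The genuine gap is your justification that $\cc1{\sN_{E/Y}} \ne 0$: you invoke ``Grauert's contractibility criterion'' to conclude that $\O E(-E)$ is ample. Grauert's criterion \cite{Grauert62} goes the other way: an ample (weakly negative) conormal bundle implies contractibility. The converse is true for curves on surfaces (an irreducible exceptional curve has $E^2 < 0$), but it fails in dimension $n \ge 3$: an exceptional divisor of a resolution need not have ample conormal bundle, so for general $n$ this step has no support as written; your parenthetical derivation of projectivity of $E$ from that ampleness is moreover circular (projectivity of $E$ is free, since the functorial resolution is a projective morphism, \cref{functorial res}). Fortunately you need far less than ampleness, namely only the nonvanishing of $\cc1{\O E(E)}$ in $\HH1.E.\Omegap E1.$, and this is exactly what the paper extracts from the Negativity Lemma \cite[Lemma~3.39]{KM98}: if the class vanished, then since $E$ is smooth and projective the line bundle $\O E(E)$ would be numerically trivial, so $E$ itself would be $f$-nef; applying the Negativity Lemma to $B = -E$ (note $f_* B = 0$ is effective) would make $-E$ effective, which is absurd. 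With that one substitution your argument is complete and correct.
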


\begin{proof}
Let $E \subset Y$ be the exceptional locus of $f$, a smooth projective variety.
We make a case distinction according to whether the tangent sheaf $\T E$ is globally generated or not.

\emph{Case 1: $\T E$ is not globally generated.}
Let $\sF \subsetneq \T E$ be the subsheaf generated by $\HH0.E.\T E.$.
The restriction map $\rho \from \T Y(-\log E) \to \T E$ is surjective, hence $\sG \defn \rho\inv(\sF) \subsetneq \T Y(-\log E)$ also is a proper subsheaf.
By construction, the pullback map of vector fields $\T X \to f_* \T Y(-\log E)$ factors via $f_* \sG$.
By adjointness, also $f^* \from f^* \T X \to \T Y(-\log E)$ factors via $\sG$.
Since $\sG$ is a proper subsheaf, this shows that $f^*$ is not surjective.
As $\T X \isom \O X^{\oplus n}$ is free, taking determinants we obtain a map
\begin{equation} \label{det f^*}
\det(f^*) \from \O Y \to \O Y \big( \! -(K_Y + E) \big)
\end{equation}
which is likewise non-surjective (hence zero) along $E$.
It therefore factors via a map
\[ \O Y \lto \O Y \big( \! -(K_Y + 2 E) \big), \]
which is furthermore isomorphic outside of $E$.
This immediately implies that the discrepancy $a(E, X) \le -2$, contradicting the assumption that $\germ0X$ is log canonical.

\emph{Case 2: $\T E$ is globally generated.}
The existence of the map $\det(f^*)$ from \labelcref{det f^*} shows that $a(E, X) \le -1$, which implies $a(E, X) = -1$ as $\germ0X$ is assumed to be log canonical.
Then $K_E = (K_Y + E)|_E = (f^* K_X)|_E = 0$ and by \cref{gg det=0} below, $\T E \isom \O E^{\oplus (n - 1)}$ is trivial.
Consider now the residue sequence for $E \subset Y$ and its restriction to $E$,
\begin{equation} \label{517}
\xymatrix{
0 \ar[r] & \Omegap Y1 \ar[r] \ar[d] & \Omegap Y1(\log E) \ar[r] \ar[d] & \O E \ar[r] \ar@{=}[d] & 0 \\
0 \ar[r] & \Omegap E1 \ar[r] & \Omegap Y1(\log E)\big|_E \ar[r] & \O E \ar[r] & 0.
}
\end{equation}
By~\cite[Lemma~3.5]{GK13}, the extension class of the first line of \labelcref{517} is $\cc1{\O Y(E)} \in \HH1.Y.\Omegap Y1.$.
The extension class of the second line is then $\cc1{\O E(E)} \in \HH1.E.\Omegap E1.$, which is nonzero by the Negativity Lemma~\cite[Lemma~3.39]{KM98}.
Thus the sequences in \labelcref{517} do not split.
In particular, the dual of the lower-row sequence
\[ \xymatrix{
0 \ar[r] & \O E \ar[r] & \T Y(-\log E)\big|_E \ar^-{\rho_E}[r] & \T E \isom \O E^{\oplus (n - 1)} \ar[r] & 0
} \]
does not split.
It follows that the map of global sections
\[ \HH0.E.\rho_E. \from \HH0.E.\T Y(-\log E)\big|_E. \to \HH0.E.\T E. \]
is not surjective.
The rest of the argument proceeds exactly as in Case 1:
Let $\sF \subsetneq \T E$ be the proper subsheaf generated by the image of $\HH0.E.\rho_E.$, and set $\sG \defn \rho\inv(\sF)$.
The pullback map factorizes as $\T X \to f_* \sG \to f_* \T Y(-\log E)$.
It follows that $a(E, X) \le -2$ and we arrive at a contradiction.
\end{proof}

\begin{lem}[Criterion for triviality] \label{gg det=0}
Let $X$ be a projective variety and $\sE$ a rank $r$ vector bundle on $X$ with trivial determinant, $\det \sE \isom \O X$.
If $\sE$ is globally generated at some point $x \in X$, then $\sE \isom \O X^{\oplus r}$.
\end{lem}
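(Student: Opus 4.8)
The plan is to use global generation at the single point $x$ to build a morphism $\phi \from \O X^{\oplus r} \to \sE$ that is a fibrewise isomorphism over $x$, and then to exploit the triviality of $\det \sE$ together with the properness of $X$ to promote this into a global isomorphism. The point is that once we know $\det \phi$ is a \emph{global function}, projectivity forces it to be constant, so nonvanishing at one point already gives nonvanishing everywhere.

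First I would choose sections. Since $\sE$ has rank $r$ and is globally generated at $x$, the evaluation map $\HH0.X.\sE. \to \sE \tensor k(x)$ is surjective onto the $r$-dimensional fibre; pick $s_1, \dots, s_r \in \HH0.X.\sE.$ whose images form a basis of $\sE \tensor k(x)$. These assemble into a morphism $\phi \from \O X^{\oplus r} \to \sE$ of locally free sheaves of the same rank, and by construction the induced map on fibres $\phi \tensor k(x)$ is an isomorphism.

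Next I would take top exterior powers. The map $\det \phi \from \det \O X^{\oplus r} \isom \O X \to \det \sE$, composed with the given trivialisation $\det \sE \isom \O X$, is an element of $\HH0.X.\O X.$, that is, a global regular function on $X$. As $X$ is a projective (hence connected and proper) variety over $\C$, we have $\HH0.X.\O X. = \C$, so $\det \phi$ is a constant. Its value at $x$ equals $\det(\phi \tensor k(x))$, which is nonzero because $\phi \tensor k(x)$ is an isomorphism; therefore $\det \phi$ is a nonzero constant, i.e.~a nowhere vanishing unit. A morphism of vector bundles of equal rank whose determinant is a unit is itself an isomorphism, so $\phi \from \O X^{\oplus r} \bij \sE$, as claimed.

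There is no serious obstacle here; the only non-formal input is the equality $\HH0.X.\O X. = \C$, which is precisely where projectivity is used and where the triviality of $\det \sE$ becomes essential. Without $\det \sE \isom \O X$, the determinant $\det \phi$ would be a section of a possibly nontrivial line bundle, and the passage from ``isomorphism at $x$'' to ``isomorphism everywhere'' would break down.
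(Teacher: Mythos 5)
Your proof is correct and takes essentially the same approach as the paper's: both choose sections $s_1, \dots, s_r$ generating $\sE$ at $x$, observe that the resulting determinant section of $\det \sE \isom \O X$ (your $\det \phi$, the paper's $s_1 \wedge \cdots \wedge s_r$) is a nonzero global function on the projective variety $X$, hence a nonzero constant, hence nowhere vanishing, and conclude that the induced map $\O X^{\oplus r} \to \sE$ is an isomorphism. The only difference is expository: the paper phrases the last step as the sections generating $\sE$ everywhere, while you phrase it as the determinant of $\phi$ being a unit.
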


\begin{proof}
Take $r$ sections $s_1, \dots, s_r \in \HH0.X.\sE.$ which generate $\sE$ at $x$, i.e.~the images of the $s_i$ in $\factor{\sE_x}{\frm_x\sE_x}$ form a basis of that vector space.
Then $s_1 \wedge \cdots \wedge s_r \in \HH0.X.\det\sE.$ is nonzero, hence nowhere vanishing.
It follows that the sections $s_i$ generate $\sE$ everywhere.
The map $\O X^{\oplus r} \to \sE$ defined by them is thus an isomorphism.
\end{proof}
}
{}

\section{Proof of \cref{TX gen nef}}

Let $X$ be a projective surface with locally free and generically nef tangent sheaf.
We want to show that $X$ is smooth and classify the possibilities for $X$.
Since the proof of smoothness involves some nested case distinctions, it may be a little hard to follow.
For the reader's convenience, the structure of the argument is therefore depicted in \cref{proof structure}.

\subsection*{Step 1: $X$ is smooth}

\begin{figure}
  \centering
  \tikzstyle{every node} = [draw=black, thick, anchor=west]
  \tikzstyle{endnode} = [draw=green!75!black, fill=black!10]

  \begin{tikzpicture}
    [ grow via three points = {one child at (-1.1, -0.7) and
        two children at (-1.1, -0.7) and (-1.1, -1.4)},
      edge from parent path = { ($(\tikzparentnode.south west)!0.2!(\tikzparentnode.south)$) |- (\tikzchildnode.west) }
    ]
    \node { Consider $\hh0.X.{\RR1.f.\O S.}.$ }
    child {
      node[endnode] { $\cdots = 1$: done by \labelcref{genus1} -- Case \textsf A }
    }
    child {
      node { $\cdots = 2$: Consider $|\Sing X|$ }
      child {
        node { $\cdots = 1$: Consider $\Exc(f)$ }
        child {
          node[endnode] { \dots not a tree of rational curves: done by \labelcref{genus2} -- Case \textsf B }
        }
        child {
          node[endnode] { \dots tree of rational curves: get $-2 = K_S \cdot F = 0$ \Lightning\ -- Case \textsf C }
        }
      }
      child[missing] { }
      child[missing] { }
      child {
        node[endnode] { $\cdots = 2$: done by \labelcref{genus1} -- Case \textsf D }
      }
    };
  \end{tikzpicture}

  \caption{Structure of Step 1 in the proof of \cref{TX gen nef}}
  \label{proof structure}
\end{figure}
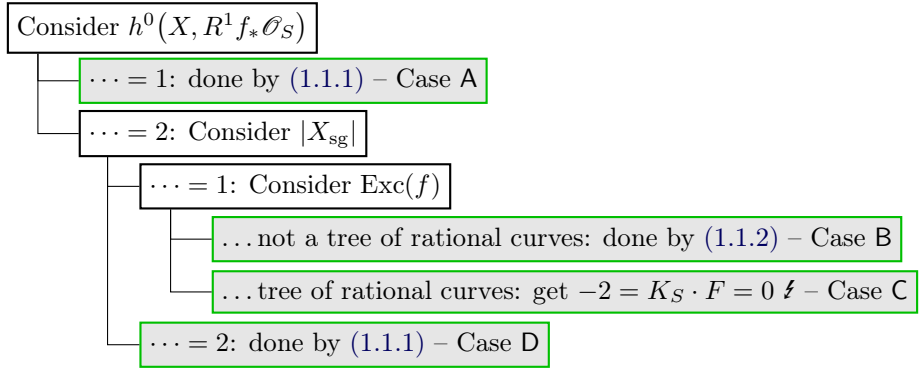

By~\cite[Thm.~3]{Lip65}, $X$ is normal.
We may assume that $X$ is not smooth, otherwise we are done.
Let $f \from S \to X$ be the minimal resolution, i.e.~$K_S$ is $f$-nef (equivalently, $f$ does not contract any $(-1)$-curves).

\begin{clm} \label{kappa S}
The Kodaira dimension $\kappa(S) = -\infty$, and $\kappa(X, K_X) \le 0$.
If $\HH0.X.\can X. \ne 0$, then $\can X \isom \O X$.
\end{clm}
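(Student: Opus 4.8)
My plan is to reduce everything to a single numerical inequality coming from generic nefness. Since $\T X$ is locally free of rank $2$, its determinant $\det \T X$ is a line bundle which on the smooth locus restricts to $\can{\Reg X}^{-1}$; hence $\det \T X \cdot C = -K_X \cdot C$ for every curve $C \subset \Reg X$. I take $H = H_1$ to be the ample class witnessing generic nefness and let $C \in |mH|$, $m \gg 0$, be a general member. By Bertini $C$ is smooth, and since $\Sing X$ is finite we may assume $C \subset \Reg X$. As $\T X|_C$ is nef, so is its determinant, so $-K_X \cdot C = \deg(\det \T X|_C) \ge 0$; because $C \equiv mH$ this gives the inequality
\[ K_X \cdot H \le 0, \]
on which the whole proof rests.

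The second and third assertions are then immediate. If $\kappa(X, K_X) \ge 1$, then for suitable $m$ the system $|mK_X|$ moves and produces a nonzero effective divisor $D \sim mK_X$ with $D \cdot H > 0$ (effective times ample), contradicting $mK_X \cdot H \le 0$; hence $\kappa(X, K_X) \le 0$. Likewise, if $\HH0.X.\can X. \ne 0$ then $K_X \sim D$ for some effective $D \ge 0$, and the inequality forces $D \cdot H \le 0$. Since $H$ is ample this is possible only for $D = 0$, so $K_X \sim 0$ and $\can X \isom \O X$.

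For the first assertion I pull everything back to the minimal resolution. Writing $K_S = f^* K_X + \Delta$ with $\Delta$ supported on $E = \Exc(f)$, the class $A \defn f^* H$ is nef and big and satisfies $K_S \cdot A = K_X \cdot H \le 0$ because $\Delta \cdot A = 0$. I argue by contradiction: suppose $\kappa(S) \ge 0$, so that $K_S$ is pseudoeffective, and let $K_S = P + N$ be its Zariski decomposition. Then $0 \ge K_S \cdot A = P \cdot A + N \cdot A$ with both summands nonnegative, so $P \cdot A = N \cdot A = 0$. As $A$ is big and nef, the Hodge index theorem forces $P \equiv 0$, while $N \cdot A = 0$ means every component of $N$ is $f$-exceptional. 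Since $K_S$ is $f$-nef and $K_S \equiv N$, we get $N \cdot E_j \ge 0$ for every component $E_j$ of $E$, and negative definiteness of the exceptional intersection form then forces $N = 0$. Thus $K_S \equiv 0$; intersecting $f^* K_X \equiv -\Delta$ with each $E_j$ and using negative definiteness once more gives $\Delta = 0$. Hence $f$ is crepant and $X$ has canonical, i.e.\ Du~Val, singularities.

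This crepant case is the genuine obstacle, and I would close it with the local theory. Canonical singularities are log canonical, where the Lipman--Zariski conjecture is known by~\cite{Dru13, GK13}; equivalently, Du~Val singularities are rational, so their genus vanishes and \cref{lowgenus lz}\,\labelcref{genus1} applies. Either way, $\T X$ locally free forces $X$ to be smooth, contradicting our standing assumption. Therefore $K_S$ cannot be pseudoeffective, and $\kappa(S) = -\infty$.
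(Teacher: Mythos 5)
Your proof is correct, and for the heart of the claim it takes a genuinely different route from the paper. The opening inequality $K_X \cdot H \le 0$ from generic nefness of $\T X$ and $\can X = \det(\T X)\dual$ is exactly the paper's starting point (and your observation that $\can X$ is an honest line bundle, since $\T X$ is locally free, is what makes all your intersection numbers classical), and the deductions $\kappa(X, K_X) \le 0$ and ``$\HH0.X.\can X. \ne 0 \Rightarrow \can X \isom \O X$'' match the paper's. But for $\kappa(S) = -\infty$ the paper argues differently: it first notes $\kappa(S) \le \kappa(X, K_X)$, and in the remaining case $\kappa(X, K_X) = 0$ it deduces that $K_X$ is torsion, writes $K_S = f^* K_X + E \sim_\Q E$ with $E \le 0$ anti-effective by~\cite[Cor.~4.3]{KM98}, and splits into $E = 0$ (canonical singularities, hence $X$ smooth by~\cite{Dru13, GK13}, contradicting the standing assumption) versus $E \lneq 0$, where $\kappa(S) = \kappa(S, E) = -\infty$ is immediate because a nonzero anti-effective divisor has no effective multiple. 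You instead suppose $K_S$ pseudoeffective and run a Zariski decomposition, using the Hodge index theorem against $A = f^*H$ to kill the positive part and $f$-nefness of $K_S$ plus negative definiteness of the exceptional lattice to kill the negative part, concluding that $f$ is crepant and $X$ is Du Val; all of these numerical steps check out. The trade-off: your argument proves the sharper dichotomy that $K_S$ pseudoeffective already forces canonical singularities, collapsing the paper's two subcases into one, but it invokes Zariski decomposition where the paper needs only the one-line observation about anti-effective divisors; both proofs end at the same external input (the Lipman--Zariski conjecture for canonical, equivalently genus-zero, singularities) and both legitimately use the standing reduction ``$X$ not smooth'' made just before the claim, so there is no circularity.
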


\begin{proof}
Since $\T X$ is generically nef and $\can X = \det(\T X)\dual$, we have $K_X \cdot H \le 0$ for some ample divisor $H$ on $X$.
Consequently, $\kappa(S) \le \kappa(X, K_X) \le 0$.
If $\kappa(X, K_X) = -\infty$, we are done.
If $\kappa(X, K_X) = 0$, then some multiple $mK_X$ is effective and hence $K_X \cdot H \ge 0$.
Combined with the above, we get $K_X \cdot H = 0$.
Consequently, $mK_X$ is trivial and $K_X$ is torsion.
By \cite[Cor.~4.3]{KM98}, we have
\[ K_S = f^* K_X + E \sim_\Q E \]
with $E \le 0$ an anti-effective $f$-exceptional divisor.
If $E = 0$, then the singularities of $X$ are canonical, hence $X$ is smooth~\cite[Thm.~1.1]{Dru13}, \cite[Cor.~1.3]{GK13}.
So, $E \lneq 0$ and we again arrive at $\kappa(S) = \kappa(S, E) = -\infty$.
The last statement is clear from the above.
\end{proof}

\begin{clm} \label{leray}
The Leray spectral sequence associated to $f_* \O S$ yields a five-term sequence
\begin{equation*}
\begin{aligned}
0 & \lto \HH1.X.\O X. \lto \underbrace{\HH1.S.\O S.}_{\dim \, \le \, 1} \lto \underbrace{\HH0.X.{\RR1.f.\O S.}.}_{\imp \, \dim \, \le \, 2} \lto \\
  & \lto \underbrace{\HH2.X.\O X.}_{\dim \, \le \, 1} \lto \underbrace{\HH2.S.\O S.}_{= \, 0},
\end{aligned}
\end{equation*}
where the dimensions are as shown.
In particular, $\hh0.X.{\RR1.f.\O S.}. \le 2$.
\end{clm}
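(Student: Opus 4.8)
The plan is to write down the five-term exact sequence of low-degree terms attached to the Leray spectral sequence $\HH p.X.{\RR q.f.\O S.}. \Rightarrow \HH{p+q}.S.\O S.$ and then to bound its middle terms one at a time. Since $X$ is normal we have $f_* \O S = \O X$, so the sequence takes the asserted shape
\[ 0 \to \HH1.X.\O X. \to \HH1.S.\O S. \xrightarrow{\ \beta\ } \HH0.X.{\RR1.f.\O S.}. \xrightarrow{\ \gamma\ } \HH2.X.\O X. \xrightarrow{\ \delta\ } \HH2.S.\O S., \]
and it remains to establish the three dimension estimates and to read off the final bound.

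The two outer estimates are immediate from \cref{kappa S}. Since $S$ is a smooth projective surface, Serre duality gives $\HH2.S.\O S. \isom \HH0.S.\can S.\dual$, and the right-hand side vanishes because $\kappa(S) = -\infty$; thus $\delta$ is the zero map and $\gamma$ is surjective. For the other end, recall that a normal surface is Cohen--Macaulay, and that $\can X \isom (\det \T X)\dual$ is a line bundle, $\T X$ being locally free; hence Serre duality is available on $X$ and yields $\HH2.X.\O X. \isom \HH0.X.\can X.\dual$. By \cref{kappa S} we have either $\HH0.X.\can X. = 0$ or $\can X \isom \O X$, so in both cases $\hh2.X.\O X. = \hh0.X.\can X. \le 1$.

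The crux is the bound $\hh1.S.\O S. \le 1$, and this is the only place where generic nefness is used. By \cref{kappa S} we have $\kappa(S) = -\infty$, so $S$ is uniruled; consequently its Albanese map contracts the covering family of rational curves and factors, after Stein factorization, through a fibration $\pi \from S \to B$ onto a smooth curve with rational general fibre $F$. As $1$-forms do not survive on rational curves, $\pi^*$ identifies $\HH0.B.\Omegap B1.$ with $\HH0.S.\Omegap S1.$, whence $\hh1.S.\O S. = g(B)$ by Hodge symmetry. Assume for contradiction that $g(B) \ge 2$. Let $H$ be an ample divisor witnessing generic nefness, and let $C \in |mH|$ be a general member for $m \gg 0$; then $C$ avoids the finitely many singular points of $X$, so $f$ identifies $C$ with a curve $C' \subset S$ numerically equivalent to $f^* mH$, and $\T S|_{C'} \isom \T X|_C$ is nef. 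Dually, every sub-line bundle of $\Omegap S1|_{C'}$ has degree $\le 0$. On the other hand, the cotangent map $\pi^* \Omegap B1 \inj \Omegap S1$ restricts on the general $C'$ to a subsheaf of $\Omegap S1|_{C'}$ whose saturation is a sub-line bundle of degree $\ge (2 g(B) - 2) \cdot (C' \cdot F)$. Since $F$ is not $f$-exceptional, $C' \cdot F = mH \cdot f_* F > 0$, so this degree is strictly positive for $g(B) \ge 2$, contradicting the previous sentence. Hence $g(B) \le 1$ and $\hh1.S.\O S. \le 1$.

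Finally I assemble the pieces. Exactness gives $\img \beta = \ker \gamma$, and surjectivity of $\gamma$ (from $\HH2.S.\O S. = 0$) gives $\img \gamma = \HH2.X.\O X.$; therefore
\[ \hh0.X.{\RR1.f.\O S.}. = \dim \ker \gamma + \dim \img \gamma \le \hh1.S.\O S. + \hh2.X.\O X. \le 1 + 1 = 2. \]
The main obstacle is the irregularity bound $\hh1.S.\O S. \le 1$: the two Serre-duality estimates are routine once \cref{kappa S} is in hand, whereas ruling out $g(B) \ge 2$ requires combining the uniruledness coming from $\kappa(S) = -\infty$ with the restriction-to-a-general-curve incarnation of generic nefness.
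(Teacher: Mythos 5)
Your proof is correct and takes essentially the same approach as the paper: the five-term Leray sequence with $f_* \O S = \O X$, the two outer bounds $\hh2.S.\O S. = 0$ and $\hh2.X.\O X. \le 1$ via \cref{kappa S} and Serre duality, and the crucial bound $\hh1.S.\O S. \le 1$ by restricting the nef bundle $\T X\big|_H \isom \T S\big|_{H_S}$ to a general ample curve missing $\Sing X$ and playing it against the fibration of $S$ over a base curve. The only cosmetic differences are that you produce the fibration $\pi \from S \to B$ via uniruledness and the Albanese map where the paper simply quotes the classification of surfaces with $\kappa = -\infty$ (blowup of $\PP2$ or of a ruled surface), and that you phrase the genus bound dually --- the saturation of $\pi^* \Omegap B1$ in $\Omegap S1\big|_{C'}$ would be a sub-line bundle of positive degree if $g(B) \ge 2$ --- instead of the paper's direct computation that the image of $(\d\pi)\big|_{H_S}$ forces $\deg \T C = 2 - 2g \ge 0$.
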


\begin{proof}
By \cref{kappa S} and Serre duality on both $S$ and $X$, we have $\hh2.S.\O S. = \hh0.S.\can S. = 0$ and $\hh2.X.\O X. = \hh0.X.\can X. \le 1$.
It remains to show that $\hh1.S.\O S. \le 1$.
The surface $S$ is a blowup of either $\PP2$ or a ruled surface over a curve $C$, say of genus $g$.
In the first case, $\HH1.S.\O S. = 0$ and we are done.
In the second case, let $\pi \from S \to C$ be the natural map, and pick a general sufficiently ample divisor $H$ on $X$.
Let $H_S$ be its strict transform on $S$.
Since $H$ misses the singular points of $X$, we see that $H_S \isom H$ and that $\T S\big|_{H_S} \isom \T X\big|_H$ is nef.
The differential of $\pi$ restricted to $H_S$,
\[ (\d\pi)\big|_{H_S} \from \T S\big|_{H_S} \lto (\pi^*\T C)\big|_{H_S}, \]
shows that $(\pi^*\T C)\big|_{H_S} = (\pi|_{H_S})^*\T C$ contains a line bundle of non-negative degree, hence has non-negative degree itself.
Consequently, $\deg \T C = 2 - 2g \ge 0$, which implies $g \le 1$.
But $\hh1.S.\O S. = \hh1.C.\O C. = g$.
This finishes the proof.
\end{proof}

Since the singularities of $X$ are isolated, the following formula holds:
\[ \hh0.X.{\RR1.f.\O S.}. = \sum_{x \in \Sing X} \pg Xx. \]
Furthermore, rational singularities cannot occur by \labelcref{genus1}.
This leaves us with the following possibilities (cf.~\cref{proof structure}):
\begin{itemize}
\item Case \textsf A: $X$ has exactly one singular point, which is of genus one.
\item Case \textsf B: $X$ has exactly one singular point, which is of genus two. $\Exc(f)$ is not a tree of rational curves.
\item Case \textsf C: Same as Case \textsf B, but $\Exc(f)$ \emph{is} a tree of rational curves.
\item Case \textsf D: $X$ has exactly two singular points. Both of them are of genus one.
\end{itemize}
In Cases \textsf A and \textsf D, \labelcref{genus1} implies that $X$ is smooth, and we are done.
In Case \textsf B we use \labelcref{genus2} instead.
Case \textsf C will be excluded by a more careful analysis.

Indeed, note that by \cref{leray}, it can happen that $\hh0.X.{\RR1.f.\O S.}. = 2$ only if both $\HH1.S.\O S.$ and $\HH2.X.\O X.$ are one-dimensional.
The non-vanishing of $\HH1.S.\O S.$ means that $S$ is a blowup of a ruled surface over an elliptic curve $C$, hence comes equipped with a natural map $\pi \from S \to C$.
On the other hand, by \cref{kappa S} the non-vanishing $\HH2.X.\O X. \ne 0$ implies $\can X \isom \O X$, hence $K_S$ has a representative (not necessarily effective) whose support is contained in $\Exc(f)$.
But $\pi \big( \!\Exc(f) \big)$ is a point, since every component of $\Exc(f)$ is a rational curve, while $C$ is elliptic.
This clearly implies that $K_S \cdot F = 0$, where $F \subset S$ is a general fibre of $\pi$.
On the other hand, $F \isom \PP1$ and $F^2 = 0$, so $K_S \cdot F = -2$ by adjunction.
We arrive at a contradiction, showing that Case \textsf C in fact cannot occur.
This finishes the proof of the first part of \cref{TX gen nef}, namely that $X$ is smooth.

\subsection*{Step 2: Classification}

It remains to classify all smooth projective surfaces $X$ with $\T X$ generically nef.
To this end, let $X$ be such a surface and let $f \from X \to X_0$ be a minimal model, i.e.~$X_0$ does not contain any $(-1)$-curves.
Since $\T X$ is generically nef, we have $\kappa(X) \le 0$.
\begin{itemize}
\item If $\kappa(X) = -\infty$, then either $X_0 \isom \PP2$ or $\pi_0 \from X_0 \to C$ is a ruled surface over a curve $C$ of genus $g$.
By the argument in the proof of \cref{leray}, it follows that $g \le 1$.
If $X_0 \isom \PP2$, then either $f$ is an isomorphism and we are in Case~\labelcref{P2}, or $f$ is not an isomorphism and Case~\labelcref{ratl ruled} occurs.
If $X_0$ is ruled and $g = 0$, we are likewise in Case~\labelcref{ratl ruled}.
If $g = 1$ and $\pi \from X \to X_0 \to C$ has a non-reduced fibre, let $H \subset X$ be a general sufficiently ample divisor.
In every point where $H$ meets a non-reduced component of a fibre of $\pi$, the map
\[ (\d\pi)\big|_H \from \T X\big|_H \lto (\pi^*\T C)\big|_H \isom \O H \]
is not surjective.
This shows that $\T X\big|_H$ is not nef, as it has a line bundle quotient of negative degree.
Consequently, all fibres of $\pi$ are reduced and we are in Case~\labelcref{ruled ell}.
\item If $\kappa(X) = 0$, we claim that $X = X_0$ is already minimal.
Otherwise, as $K_{X_0} \sim_\Q 0$, the canonical divisor of $X$ would be effective and nonzero.
Then $K_X \cdot H > 0$ for any $H$ ample on $X$, contradicting the generic nefness of $\T X$.
By the Kodaira--Enriques classification~\cite[Table~10 on p.~244]{BHPV04}, this accounts for Cases~\labelcref{abelian} and~\labelcref{K3}.
\end{itemize}

Conversely, we need to show that the above surfaces enjoy the positivity properties claimed in \cref{TX gen nef}.
\begin{itemize}
\item Case~\labelcref{P2}: If $X \isom \PP2$, the tangent bundle is ample by the Euler sequence.

\item Case~\labelcref{ratl ruled}: Let $\pi_0 \from X_0 \to \PP1$ be a rational ruled surface such that $X$ is a blowup of $X_0$.
Let $C_0 \subset X_0$ be a section of $\pi_0$ with $C_0^2 = -n \le 0$, and let $F \subset X_0$ be a fibre of $\pi_0$.
Consider the relative tangent sheaf sequence of $\pi \from X \to X_0 \to \PP1$,
\[ 0 \lto \T{X/\PP1} \lto \T X \lto \cQ \lto 0, \]
where $\cQ \subset \pi^* \T{\PP1}$ is the image of $\d\pi$.
Since a general sufficiently ample $H \subset X$ misses the finitely many singular points of the torsion-free sheaf $\T{X/\PP1}$, restricting to $H$ preserves injectivity:
\begin{equation} \label{extension}
0 \lto \T{X/\PP1}\big|_H \lto \T X\big|_H \lto \cQ\big|_H \lto 0.
\end{equation}
We will show that for a suitable choice of $H$, both the kernel and the cokernel in \labelcref{extension} are ample line bundles.

Clearly $\T{X/\PP1}$ and $f^* \T{X_0/\PP1}$ agree outside the $f$-exceptional set.
Also the line bundle $\T{X_0/\PP1}$ is easily calculated to equal $2 \ C_0 + n F$, in divisor notation.
Hence $\T{X/\PP1}$ equals $f^*(2 \ C_0 + n F) + E$, for some (not necessarily effective) $f$-exceptional divisor $E$ on $X$.
Let $A$ be ample on $X_0$, and set $H_\ell \defn H + \ell \cdot f^* A$ for $\ell \ge 0$.
Then
\[ \cc1{\T{X/\PP1}} \cdot H_\ell = \underbrace{\deg \big( \T{X/\PP1}\big|_H \big)}_{\text{indep.~of $\ell$}} + \ \ell \cdot \underbrace{(2 \ C_0 + n F) \cdot A}_{>0}, \]
which is positive for $\ell \gg 0$.
We see that up to replacing $H$ by $H_\ell$, the line bundle $\T{X/\PP1}\big|_H$ is ample.

For $\cQ\big|_H$, the argument is similar.
The sheaf $\cQ$ agrees with $\pi^* \T{\PP1}$ outside the $f$-exceptional set, so it equals $f^*(2 \ F) + E'$ for some $f$-exceptional divisor $E'$.
Therefore
\[ \cc1{\cQ} \cdot H_\ell = \underbrace{\deg \big( \cQ\big|_H \big)}_{\text{indep.~of $\ell$}} + \ \ell \cdot \underbrace{(2 \ F) \cdot A}_{>0} \]
is positive for $\ell \gg 0$.

Picking $\ell$ sufficiently large for both of the above arguments to work, the sheaf $\T X\big|_H$ is exhibited by \labelcref{extension} as an extension of ample bundles.
Thus it is ample itself~\cite[Prop.~6.1.13]{Laz04b}.

\item Case~\labelcref{ruled ell}: This case is completely analogous to the previous one, hence we only give a sketch of the proof.
We keep the same notation as before.
The analogue of \labelcref{extension} reads
\[ 0 \lto \T{X/C}\big|_H \lto \T X\big|_H \lto \underbrace{(\pi^* \T C)\big|_H}_{\isom \O H} \lto 0. \]
To justify surjectivity on the right-hand side, note that by assumption the fibres of $\pi$ are reduced and so there are only finitely many $x \in X$ which are singular points of $\pi\inv \big( \pi(x) \big)$.
These are exactly the points where $\d\pi$ fails to be surjective, and they are missed by the general ample divisor $H$.

Since $\T{X_0/C}$ still equals $2 \ C_0 + n F$, exactly the same calculation as above shows that $\T{X/C}\big|_H$ has positive degree, for suitable choice of $H$.
It follows that $\T X\big|_H$ is nef, being an extension of nef bundles.

\item Case~\labelcref{abelian}: The tangent bundle of an abelian surface is trivial, in particular it is nef.
A bi-elliptic surface $X$ admits a finite \'etale map $\gamma \from E_1 \x E_2 \to X$ from a product of elliptic curves $E_1, E_2$.
The pullback $\gamma^* \T X \isom \T{E_1 \x E_2}$ is trivial, in particular nef.
Then also $\T X$ itself is nef~\cite[Prop.~6.1.8]{Laz04b}.

\item Case~\labelcref{K3}: Let $H \subset X$ be a general sufficiently ample divisor.
By~\cite[Cor.~6.4]{Miy87b}, the restriction of the cotangent bundle $\Omegap X1\big|_H$ is nef.
As $\cc1X = 0$, also its dual $\T X\big|_H$ is nef.
\end{itemize}

\noindent
This finishes the proof of \cref{TX gen nef}. \qed

\section{Proof of Corollaries \labelcref{X almost hom} and \labelcref{TX trivial}}

\subsection*{Proof of \cref{X almost hom}}

Assume that $X$ is almost homogeneous, and let $f \from S \to X$ be the functorial resolution.
Then also $S$ is almost homogeneous, as $\HH0.S.\T S. = \HH0.X.\T X.$.
According to~\cite[Main Theorem]{PottersAlmostHomogeneous}, the surface $S$ is one of the following:

\stepcounter{thm}
\begin{enumerate}
\item\label{pot ratl} the projective plane $\PP2$, a rational ruled surface, or a blowup thereof,
\item\label{pot ruled ell} a projective bundle $\P_C(\sE) \to C$ over an elliptic curve $C$, where the vector bundle $\sE$ either
  \begin{itemize}
  \item decomposes as $\sL \oplus \O C$ for some $\sL \in \Picn(C)$, or
  \item it is the unique non-trivial extension $0 \to \O C \to \sE \to \O C \to 0$,
  \end{itemize}
\item\label{pot Hopf} an abelian Hopf surface, i.e.~a surface with universal covering $\C^2 \setminus \set 0$ and abelian fundamental group,
\item\label{pot torus} a complex $2$-torus.
\end{enumerate}

\begin{clm} \label{can X}
We have $\hh2.X.\O X. \le 1$, and equality holds if and only if $\can X \isom \O X$.
\end{clm}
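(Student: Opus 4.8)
The plan is to bypass the resolution $f \from S \to X$ entirely and argue directly on $X$, using only that $\T X$ is locally free and that $X$ is almost homogeneous. Two observations drive the proof. First, since $\T X$ is locally free of rank $2$, its dual $\Omegap X1 = \T X\dual$, and hence $\can X = \Omegar X2 \isom (\det \T X)\inv$, are invertible; in particular the normal surface $X$ is Gorenstein. As $X$ is moreover compact and Cohen--Macaulay, Serre duality for the dualizing sheaf yields
\[ \hh2.X.\O X. = \hh0.X.\can X. , \]
so that everything reduces to counting sections of the line bundle $\can X$.

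Second, almost homogeneity produces an anticanonical section. By the characterization recalled just before \cref{X almost hom}, the sheaf $\T X$ is globally generated at some point $x \in X$; choosing $v_1, v_2 \in \HH0.X.\T X.$ whose values span the fibre $\T X \tensor \C(x)$, the section
\[ s \defn v_1 \wedge v_2 \in \HH0.X.{\det \T X}. = \HH0.X.{\can X\inv}. \]
is nonzero at $x$, hence nonzero. Writing $D \defn \{ s = 0 \}$ for its effective zero divisor, we obtain $\can X \isom \O X(-D)$ with $D \ge 0$.

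It then remains to read off the bound. The inclusion $\O X(-D) \subseteq \O X$ coming from $D \ge 0$ gives
\[ \hh0.X.\can X. = \hh0.X.{\O X(-D)}. \le \hh0.X.\O X. = 1, \]
the last equality because $X$ is compact and connected. A nonzero section of $\O X(-D)$ is a global holomorphic function vanishing along $D$, hence a constant that must vanish unless $D = 0$; therefore equality forces $D = 0$, that is $\can X \isom \O X$, and conversely $\can X \isom \O X$ gives $\hh0.X.\can X. = 1$. This yields $\hh2.X.\O X. \le 1$ together with the stated characterization of equality.

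The argument involves no real computation, and in particular uses neither the resolution $S$ nor Potters' classification. The only steps requiring care are the two structural inputs: that local freeness of $\T X$ makes $X$ Gorenstein with $\can X \isom (\det \T X)\inv$ a genuine line bundle, and the validity of Serre duality for $X$ in the present compact complex-analytic setting, where $X$ need be neither projective nor \kahler (for instance when its functorial resolution is an abelian Hopf surface). I expect verifying this last point to be the main, if modest, obstacle.
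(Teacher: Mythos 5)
Your proof is correct and takes essentially the same route as the paper: both reduce via Serre duality to computing $\hh0.X.\can X.$ and use the nonzero section $v_1 \wedge v_2$ of $\can X\dual = \det \T X$ furnished by almost homogeneity to conclude that $\hh0.X.\can X. \neq 0$ forces $\can X \isom \O X$. The paper merely leaves implicit the points you spell out, namely the effective zero divisor $D$ with $\can X \isom \O X(-D)$ and the validity of Serre duality here (a normal surface is Cohen--Macaulay, and local freeness of $\T X$ makes the dualizing sheaf $\can X \isom (\det \T X)\inv$ invertible, so duality in the compact analytic setting applies).
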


\begin{proof}
By Serre duality, $\hh2.X.\O X. = \hh0.X.\can X.$.
Since $\T X$ is globally generated at some point, its determinant $\can X \dual$ has a nonzero section.
Therefore, as soon as $\hh0.X.\can X. \ne 0$, it follows that $\can X \isom \O X$.
\end{proof}

\noindent
We treat each case in Potters' list separately.

\begin{itemize}
\item In case~\labelcref{pot ratl}, the Leray spectral sequence associated to $f_* \O S$ (see \cref{leray}) yields $\hh0.X.{\RR1.f.\O S.}. \le 1$, using \cref{can X}.
Arguing as in the proof of \cref{TX gen nef}, we conclude by \labelcref{genus1} that $X$ is smooth.
\item In case~\labelcref{pot ruled ell}, we get $\hh0.X.{\RR1.f.\O S.}. \le 2$.
We can still argue as in Case \textsf C of the proof of \cref{TX gen nef} to obtain smoothness of $X$.
\item In case~\labelcref{pot Hopf}, note that $S$ does not contain any negative curves, as $b_2(S) = 0$ \cite[Thm.~18.4]{BHPV04}.
Hence $f$ is an isomorphism and $X = S$ is smooth.
\item In case~\labelcref{pot torus}, $S$ does not contain any negative curves since it is homogeneous.
Again, $f$ is an isomorphism and $X$ is smooth.
\end{itemize}

\noindent
This ends the proof of \cref{X almost hom}. \qed

\subsection*{Proof of \cref{TX trivial}}

Let $\set{ v_1, v_2 }$ be a global basis of $\T X$.
Either $v_1$ and $v_2$ commute, i.e.~the Lie bracket $[v_1, v_2] = 0$, or (after a suitable change of basis) we may assume $[v_1, v_2] = v_2$.
In either case, for the dual basis $\set{ \alpha_1, \alpha_2 }$ of $\Omegar X1$ we get $\d\alpha_1 = 0$.
Hence $\alpha_1$ extends to $\wt\alpha_1 \in \HH0.Y.\Omegap Y1.$, for a resolution $Y \to X$.
We conclude as in the proof of \cref{lowgenus lz} that $X$ is smooth.

If we were in the case $[v_1, v_2] = v_2$, then $\d\alpha_2$ would be a nowhere-vanishing $2$-form on $X$.
Since any $1$-form on a smooth compact complex surface is closed, this case actually cannot occur and so $v_1$ and $v_2$ commute.
The flow maps associated to these vector fields then show that $X$ is a complex torus. \qed

\section{Two examples of surface singularities} \label{examples}

\begin{exm} \label{pg=1 not lc}
We give an example of a normal Gorenstein surface singularity $\germ0X$ of genus $\pg X0 = 1$ which is not log canonical.
Consider a star-shaped tree of five smooth rational curves $C_0 + \dots + C_4$ in a smooth surface $Y$, having the following intersection matrix (empty entries are zero):
\begin{equation} \label{matrix 1}
\begin{pmatrix}
-2 &  1 &  1 &  1 &  1 \\
 1 & -3 &    &    &    \\
 1 &    & -3 &    &    \\
 1 &    &    & -3 &    \\
 1 &    &    &    & -3
\end{pmatrix}
\end{equation}
Such a configuration clearly exists, e.g.~by starting with $C_0$ the zero section of the line bundle $\O{\PP1}(2)$, blowing up four distinct points on $C_0$, and then blowing up two more points on each of the exceptional curves.
A short calculation shows that \labelcref{matrix 1} is negative definite, hence the curves $C_0 + \cdots + C_4 \subset Y$ can be blown down to the desired normal singularity $\germ0X$ by~\cite[p.~367]{Grauert62}.

The fundamental cycle $Z$ of this singularity (i.e.~the unique minimal nonzero effective relatively anti-nef exceptional divisor) is easily seen to be $Z = 2 \ C_0 + C_1 + \cdots + C_4$.
It satisfies $\chi(Z) = 0$, while $\chi(Z') = 1 > 0$ for $0 < Z' < Z$.
Hence $\germ0X$ is minimally elliptic in the sense of~\cite[Def.~3.2]{Laufer77}.
Then by~\cite[Thm.~3.10]{Laufer77}, the genus $\pg X0 = 1$ and $\germ0X$ is Gorenstein.
Furthermore, by~\cite[Thm.~3.4(2)]{Laufer77} we have $K_Y = -Z$.
In particular, the discrepancy $a(X, C_0) = -2$ and so $\germ0X$ is not log canonical.
\end{exm}

\begin{exm} \label{ratl tree pg large}
Fix any integer $g \ge 0$.
We give an example of a normal surface singularity $\germ0X$ which is obtained by contracting a tree of rational curves and with genus $\pg X0 \ge g$.
Consider a tree of smooth rational curves $C_0 + \dots + C_{g+3} \subset Y$, having the following intersection matrix for some $d \ge 1$:
\[ A_{g,d} =
\begin{pmatrix}
    -2 &  1 &  1 & \cdots &  1 \\
     1 & -d &    &        &    \\
     1 &    & -d &        &    \\
\vdots &    &    & \ddots &    \\
     1 &    &    &        & -d
\end{pmatrix}
\in \R^{(g+4) \x (g+4)}
\]
This can be constructed in a similar way as in the previous example.
We claim that for $d \ge g + 3$, the matrix $A_{g,d}$ is negative definite.
\PreprintAndPublication{
To this end, we will use the criterion given by \cref{pos def crit} below.}
{To this end, we will use the following well-known criterion:
\emph{Let $A$ be a real symmetric $n \x n$-matrix with non-negative off-diagonal entries.
Then $A$ is negative definite if and only if there exists a vector $v \in (\R^+)^n$ such that $Av \in (\R^-)^n$.}
}%
Consider $v = (g + 2, 1, \dots, 1) \in \R^{g+4}$.
Then $A_{g,d} \cdot v = (-g - 1, g + 2 - d, \dots, g + 2 - d)$.
For $d \ge g + 3$, this has only negative entries and we may apply the criterion.

Contracting $C_0 + \cdots + C_{g+3}$ yields a singularity $\germ0X$.
For $Z = 2 \ C_0 + C_1 + \cdots + C_{g+3}$, we have a short exact sequence
\[ 0 \lto \underbrace{\O{C_0}(-\Red Z)}_{\isom \O{\PP1}(-g-1)} \lto \O Z \lto \O{\Red Z} \lto 0. \]
So $\euler Z.\O Z. = \euler \PP1.\O{\PP1}(-g-1). + \euler \Red Z.\O{\Red Z}. = -g + 1$ and $\HH0.Z.\O Z. = \C$.
It follows that $\dim \HH1.Z.\O Z. = g$ and hence $\pg X0 \ge g$, as desired.
\end{exm}

\PreprintAndPublication{
The following matrix-theoretic result is used e.g.~in~\cite{Grauert62} without a reference.
For the reader's convenience, we provide a proof.

\begin{prp}[Criterion for positive definiteness] \label{pos def crit}
Let $A = (a_{ij})$ be a real symmetric $n \x n$-matrix with non-positive off-diagonal entries.
The following are equivalent:
\begin{enumerate}
\item\label{1072} The matrix $A$ is positive definite.
\item\label{1073} There exists a vector $v \in (\R^+)^n$ such that also $Av \in (\R^+)^n$.
\end{enumerate}
\end{prp}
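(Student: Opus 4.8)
The plan is to prove the two implications separately, starting with \labelcref{1073} $\imp$ \labelcref{1072}, which is the direction actually invoked in \cref{pg=1 not lc} and \cref{ratl tree pg large} and is the more elementary of the two. Given a vector $v$ with strictly positive entries such that $Av$ also has strictly positive entries, the idea is to rescale coordinates so that strict diagonal dominance becomes manifest. Let $D$ be the diagonal matrix with diagonal entries $v_1, \dots, v_n$ and set $M \defn DAD$. Then $M$ is symmetric, its off-diagonal entries $M_{ij} = v_i v_j a_{ij}$ are $\le 0$ (as $a_{ij} \le 0$ and $v_i, v_j > 0$), and its $i$-th row sum is $v_i \cdot (Av)_i > 0$. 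Rewriting this as $M_{ii} = v_i (Av)_i + \sum_{j \ne i} |M_{ij}|$ exhibits $M$ as a symmetric, strictly diagonally dominant matrix with positive diagonal; by Gershgorin's theorem all its (real) eigenvalues are positive, so $M$ is positive definite. Since $D$ is invertible and $x^\top A x = (D\inv x)^\top M (D\inv x)$, the matrix $A$ is positive definite as well.

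For the converse \labelcref{1072} $\imp$ \labelcref{1073}, the strategy is to take $v \defn A\inv \mathbf 1$, where $\mathbf 1$ denotes the all-ones vector. Then $Av = \mathbf 1$ has positive entries by construction, and the only thing left to verify is that $v$ itself has positive entries. This reduces to showing that $A\inv$ is entrywise non-negative, i.e.\ that $A$ is a Stieltjes matrix. To prove this I would write $A = sI - B$ for a real number $s$ chosen strictly larger than the largest eigenvalue of $A$. Then $B = sI - A$ is entrywise non-negative (its off-diagonal entries are $-a_{ij} \ge 0$ and its diagonal entries are $s - a_{ii} > 0$), while its eigenvalues, being of the form $s - \lambda$ with $0 < \lambda \le \lambda_{\max}(A) < s$, all lie in the open interval $(0, s)$. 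Hence the spectral radius of $B$ satisfies $\rho(B) < s$, and the Neumann series $A\inv = s\inv \sum_{k \ge 0} (s\inv B)^k$ converges with all terms entrywise non-negative, giving $A\inv \ge 0$. Since $A\inv$ is symmetric positive definite, its diagonal entries are positive, so $v_i = \sum_j (A\inv)_{ij} \ge (A\inv)_{ii} > 0$, as required.

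The main obstacle is the direction \labelcref{1072} $\imp$ \labelcref{1073}: the heart of the matter is the non-negativity of $A\inv$, and the decisive step is the passage from positive definiteness to the spectral-radius bound $\rho(B) < s$, which is exactly what makes the Neumann series converge to a non-negative matrix. The reverse implication, by contrast, is essentially just the observation that a positive diagonal rescaling turns the hypothesis into strict diagonal dominance.
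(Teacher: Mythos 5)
Your proposal is correct, but it diverges from the paper's proof on both implications. For ``\labelcref{1073} $\imp$ \labelcref{1072}'', the paper rescales the \emph{columns} only, via $A \cdot \mathrm{diag}(v_1, \dots, v_n)$, deliberately sacrificing symmetry to obtain strict diagonal dominance; it then proves $\det(A) > 0$ by a continuity argument (connecting $A$ to its diagonal part through a segment of strictly diagonally dominant, hence invertible, matrices) and concludes via Sylvester's criterion applied to all leading principal minors. Your symmetric congruence $M \defn DAD$ is cleaner: it keeps symmetry, so strict diagonal dominance plus Gershgorin yields positive definiteness of $M$ directly, and $A$ inherits it by congruence --- no homotopy, no Sylvester, and no need to check (as the paper silently does) that hypothesis \labelcref{1073} passes to leading principal submatrices, which holds because dropping columns with non-positive entries only increases $(Av)_i$, but is a point your route avoids entirely. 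For ``\labelcref{1072} $\imp$ \labelcref{1073}'', both proofs reduce to the Stieltjes-matrix fact $A\inv \ge 0$ entrywise and take $v = A\inv \mathbf{1}$; the paper gets there through the Cholesky decomposition $A = LL^{\mathsf T}$, asserting that $L$ inherits the non-positive off-diagonal sign pattern (true, but itself an induction on Gaussian elimination that the paper leaves to the reader), whence $L\inv \ge 0$ and $A\inv = L^{-\mathsf T}L\inv \ge 0$. Your regular-splitting argument $A = sI - B$ with $\rho(B) < s$ and the Neumann series is the other classical route to the same fact; it trades the glossed Cholesky sign claim for standard spectral-radius and convergence facts. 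One micro-gap worth making explicit in your write-up: the positivity of the diagonal entries $s - a_{ii}$ of $B$ requires $s > a_{ii}$, which follows from $a_{ii} = e_i^{\mathsf T} A e_i \le \lambda_{\max}(A) < s$; with that line added, your argument is complete, and your strict positivity of $v$ via $v_i \ge (A\inv)_{ii} > 0$ is a detail the paper itself does not spell out.
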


\begin{proof}
``\labelcref{1072} $\imp$ \labelcref{1073}'': Let $A = LL^{\mathsf T}$ be the Cholesky decomposition of $A$, where $L$ is lower triangular and has positive diagonal entries.
It is easy to see that $L$ has non-positive off-diagonal entries since $A$ does.
Thus $L\inv$ has all entries non-negative, and then the same is true of $A\inv = L^{-\mathsf T} L\inv$.
It follows that $v = A\inv \cdot (1, \dots, 1)$ has the desired properties.

``\labelcref{1073} $\imp$ \labelcref{1072}'': It suffices to show that $\det(A) > 0$, since we can run the same argument on the leading principal minors of $A$ and then apply Sylvester's criterion.
Write $v = (v_1, \dots, v_n)$.
Replacing $A$ by $A \cdot \mathrm{diag}(v_1, \dots, v_n)$, we may assume that $v = (1, \dots, 1)$.
Note that here we lose symmetry of $A$, but this is not a problem.
The condition $Av \in (\R^+)^n$, combined with $a_{ij} \le 0$ for $i \ne j$, easily implies that $A$ is strictly diagonally dominant and that $a_{ii} > 0$ for all $i$.
Let $D = \mathrm{diag}(a_{11}, \dots, a_{nn})$ be the diagonal matrix containing the diagonal entries of $A$.
Connect $A$ and $D$ by the line segment $A(t) = (1 - t)A + tD$ for $0 \le t \le 1$.
Obviously, all the $A(t)$ are strictly diagonally dominant, hence invertible.
As $\det A(1) = \det D = a_{11} \cdots a_{nn} > 0$ and $\det A(t)$ is continuous in $t$, it follows that $\det A = \det A(0) > 0$, as desired.
\end{proof}
}
{}

\providecommand{\bysame}{\leavevmode\hbox to3em{\hrulefill}\thinspace}
\providecommand{\MR}{\relax\ifhmode\unskip\space\fi MR}
\providecommand{\MRhref}[2]{%
  \href{http://www.ams.org/mathscinet-getitem?mr=#1}{#2}
}
\providecommand{\href}[2]{#2}

\end{document}